\DeclareMathOperator{\tv}{TV}
\newcommand{\trunc}{\eps}
\newcommand{\ord}[2]{#1_{(#2)}}
\newcommand{\wgt}[2]{\exp \lprp{- C \cdot \frac{#1}{\min (i, n - i)}}}
\DeclareMathOperator{\spread}{spread}
\definecolor{pastelblue}{RGB}{173, 216, 230}
\definecolor{pastelgreen}{RGB}{152, 251, 152}
\definecolor{pastelpurple}{RGB}{216, 191, 216}
\definecolor{pastelorange}{RGB}{255, 204, 153} 
\definecolor{darkpurple}{RGB}{128, 0, 128}
\definecolor{darkorange}{RGB}{255, 140, 0}
\newcommand{\subgestimator}{\ifmmode\text{Sub-Gaussian Mean Estimator}\else Sub-Gaussian Mean Estimator\fi}
\newcommand{\comp}{\ifmmode\text{Comparison}\else Comparison\fi}
\def\com{1}
\newcommand{\Ynote}[1]{\footnote{\color{ForestGreen}Yeshwanth: #1}}
\newcommand{\Dnote}[1]{\footnote{\color{Magenta}Daniel: #1}}
\newcommand{\Ynote}[1]{}
\newcommand{\Dnote}[1]{}
\newcommand{\yeshwanth}[1]
{
    \if\com1
        \todo[inline,color=green!30]{\small\textbf{Yeshwanth:} #1}
    \else
    \fi
}
\newcommand{\daniel}[1]
{
    \if\com1
        \todo[inline,color=purple!30]{\small\textbf{Daniel:} #1}
    \else
    \fi
}
\begin{document}
\title{Heavy-tailed Contamination is Easier than Adversarial Contamination}
\author{Yeshwanth Cherapanamjeri\thanks{CSAIL, Massachusetts Institute of Technology. \texttt{yesh@mit.edu}} \and Daniel Lee\thanks{CSAIL, Massachusetts Institute of Technology. \texttt{lee\_d@mit.edu}}}

\date{}

\maketitle

\begin{abstract}
    A large body of work in the statistics and computer science communities dating back to Huber (Huber, 1960) has led to the development of statistically and computationally efficient estimators robust to the presence of outliers in data. In the course of these developments, two particular outlier models have received significant attention: the adversarial and heavy-tailed contamination models. While the former models outliers as the result of a potentially malicious adversary inspecting and manipulating the data, the latter instead relaxes the assumptions on the distribution generating the data allowing outliers to naturally occur as part of the data generating process. In the first setting, the goal is to develop estimators robust to the largest fraction of outliers while in the second, one seeks estimators to combat the loss of \emph{statistical} efficiency caused by outliers, where the dependence on the failure probability is paramount.
    
    Surprisingly, despite these distinct motivations, the algorithmic approaches to both these settings have converged, prompting questions on the relationship between the corruption models. In this paper, we investigate and provide a principled explanation for this phenomenon. First, we prove that \emph{any} adversarially robust estimator is also resilient to heavy-tailed outliers for \emph{any} statistical estimation problem with i.i.d data. As a corollary, optimal adversarially robust estimators for mean estimation, linear regression, and covariance estimation are also optimal heavy-tailed estimators. Conversely, for arguably the simplest high-dimensional estimation task of mean estimation, we establish the existence of optimal heavy-tailed estimators whose application to the adversarial setting \emph{requires} any black-box reduction to remove \textit{almost all the outliers} in the data. Taken together, our results imply that heavy-tailed estimation is likely easier than adversarially robust estimation opening the door to novel algorithmic approaches bypassing the computational barriers inherent to the adversarial setting. Additionally, \emph{any} confidence intervals obtained for adversarially robust estimation also hold with high-probability.

    The proof of our reduction from heavy-tailed to adversarially robust estimation rests on the isoperimetry properties of the set of adversarially robust datasets. Meanwhile, to show that the other direction is not possible, we identify novel structural properties on the data sample drawn from a heavy-tailed distribution. We show that such a sample obeys a logarithmic tail-decay condition scaling with the target failure probability. This allows for a quantile-smoothed heavy-tailed estimator which \emph{requires arbitrarily large} stable subsets of the input data to succeed. In the process of analyzing this estimator, we also strengthen the analysis of algorithms utilized previously in the literature. 
\end{abstract}

\thispagestyle{empty}
\setcounter{page}{0}
\newpage

\section{Introduction}
\label{sec:intro}

Over the past several decades, statistical methods have been, and continue to be, employed in an increasingly diverse range of applications. However, classical statistical estimation procedures such as ordinary least squares (OLS) are extremely susceptible to the presence of \emph{outliers} in the data, an increasingly prevalent issue in the large-scale datasets ubiquitous in modern machine learning. Furthermore, the sheer scale of the data used in these systems makes manual data curation extremely challenging. The challenge of mitigating the effects of these outliers has been extensively investigated over the past 60 years by the statistics and computer science communities leading to the development of estimators with near-optimal statistical and algorithmic guarantees.

For several fundamental high-dimensional estimation tasks, such as mean estimation, covariance estimation, and linear regression, much recent work has focused on two distinct models for the presence of outliers in data. In the \textit{adversarial model}, one assumes that a computationally unbounded adversary is allowed to inspect the data and arbitrarily corrupt a fraction of the samples. In contrast, the \textit{heavy-tailed model} relaxes the distributional assumptions on the data, removing the stringent requirements on the higher-order moments of the data distribution\footnote{Typically, these only require the finiteness of a small number of lower-order moments as opposed to much stronger ones such as Gaussianity or sub-Gaussianity which restrict \emph{all} the moments of the distribution.}, allowing the natural occurrence of outliers as part of the data generation process. Surprisingly, despite their distinct motivations, the algorithmic approaches for these settings have converged with several estimators from one applicable to the other. The goal of the current paper is to shed light on this phenomenon by addressing the following fundamental question:
\begin{quote}
    \emph{What is the fundamental relationship between these corruption models that enables algorithmic transference? Are they algorithmically equivalent? Is one corruption model strictly weaker than the other?}
\end{quote}

In this paper, we address the above question by establishing a formal relationship between the two corruption models. For a general class of statistical estimation problems, we show through a black-box analysis that \emph{any} estimator resilient to \emph{adversarial outliers} is also resilient to \emph{heavy-tailed} ones\footnote{In this paper, we restrict our analysis to \emph{deterministic} estimators. This avoids pathological scenarios such as those where the estimator itself may fail with constant probability (say, $0.05$). However, our analysis is also applicable to settings where the failure probability of the \emph{estimator} is chosen to be sufficiently small (say, $\exp (-n)$).}. Conversely, \emph{no} such guarantees hold for the other direction. Even for the arguably simplest robust estimation problem of \emph{high-dimensional mean estimation}, there exist estimators resilient to heavy-tailed outliers that break down when faced with adversarial corruption. In fact, we establish the much stronger statement that \emph{any} black-box reduction, even one which may potentially alter the points presented to the heavy-tailed estimator, must produce a pointset with a negligible fraction of adversarial outliers, effectively filtering out most of the outliers in the dataset. 

\paragraph{Adversarial contamination.}
The adversarial contamination model is a family of models dating back to the work of Huber \cite{huber64}, and the statistical and computational limits of estimation under this model are well understood. The idea is to model outliers via an adversary who is able to corrupt the sample. In particular, in this paper we focus on the \textit{strong contamination model}. In this model, a computationally unbounded adversary is allowed to inspect the entire sample, and arbitrarily change any $\eps$ fraction of the data.

One simple consequence of this definition is that, immediately, most classical estimators (such as empirical mean, empirical covariance, or ordinary least squares) fail to provide even \textit{finite} bounds. For example, even by changing only one point in the sample, the adversary can arbitrarily skew the empirical mean. The difficulty in this model is therefore even getting guarantees with a \textit{constant} probability of success.

\paragraph{Heavy-tailed Model.}
In the heavy-tailed setting, outliers are not modeled through an explicit adversary but rather a natural consequence of relaxing the assumptions on the data distribution. Here, one studies the loss in \textit{statistical efficiency} incurred by relaxing, say a Gaussian or sub-Gaussian assumption on the data, to a broader family of distributions which only restrict a small number of lower-order moments ($2$ or $4$ for the applications in the present paper). By studying the decay of the recovery guarantees with respect to the failure probability, this model allows for outliers to occur as samples from the tail events of the underlying distribution. Surprisingly, in many settings, it is possible to obtain statistical rates which match those obtainable under a sub-Gaussian assumption.

\paragraph{Relationship between the two models.}
Despite the a priori very different motivations and challenges of these two models, there has been a convergence in the algorithmic approaches to solving them. Recent work has constructed algorithms that achieve optimal statistical rates \emph{while} being optimally robust to outliers. In fact, in several cases, algorithms developed for one setting are directly applicable to the other with minimal modifications \cite{deplecmean,lugosi2018risk,mzcovariance,hopkinsRobustHeavyTailedMean2020,mvz,diakonikolasOutlierRobustMean2021,lmtrimmed,ctbj,azcov}. The works of Diakonikolas, Kane, and Pensia \cite{ diakonikolasOutlierRobustMean2021} and Hopkins, Li and Zhang \cite{hopkinsRobustHeavyTailedMean2020} explain this phenomenon by identifying \emph{sufficient} structural properties in samples drawn from these two models that enable this transference of \emph{specific} algorithmic approaches for restricted setting of \emph{mean estimation}.

In contrast, the goal of this paper is to understand the fundamental relationship between these two contamination models that allows algorithms developed for one setting to transfer to the other for a broad range of estimation tasks. We aim to understand the limits of \emph{black-box} reductions (see \cref{sssec:adv_stronger}) between these two models for arbitrary estimation tasks with i.i.d inputs. 

\subsection{Our Contributions}

We now move on to the presentation of our results which are a pair of theorems concerning the black-box reducability of one model to another.

\subsubsection{The adversarial contamination model is at least as strong.}
Our first main result shows that the adversarial contamination model supersedes the heavy-tailed model. We show that \emph{any} adversarially robust algorithm which uses no internal randomness is simultaneously resilient to adversarial contamination while also succeeding with high-probability. In order to state our result in full generality, we formally define the adversarial model we're considering as well as a definition of adversarially robust algorithms for generic estimation problems.

\begin{definition}[Strong Adversarial Contamination Model]\label{def:adversarial_model}
    First, $n$ data points $\wt{\bm{X}} \coloneqq \{X_i\}_{i = 1}^n$ are drawn i.i.d from a distribution $\mc{D}$. An adversary then inspects $\wt{\bm{X}}$ and constructs $\bm{X}_\eps$ by arbitrarily corrupting any $\eps$ fraction of their choosing. We refer to $\bm{X}_\eps$ as an $\eps$-corrupted sample of $\wt{\bm{X}}$.
\end{definition}
\begin{definition}[Generic Adversarially Robust Estimator]
\label{def:adv_rob_est}
    Let $\calD$ be a distribution. $G_\calD(\alpha)\subseteq \calZ$ the set of acceptable solutions, for corruption factor $\eps$. We say that $\calA$ is an adversarially robust estimator, if 
    $$\P_{\bX,\calA}\lbrb{\calA(\bX_\eps;\eps) \in G_\calD(\eps)}\geq 9/10$$
\end{definition}
The definition of $G_\calD (\eps)$ corresponds to the optimal confidence intervals for $\eps$-corruption setting and its definition varies by the particular instantiations of the estimation problem. Note that in the above definition, there's a minimization over all potential adversaries which is left implicit. Our main result here is that asking for constant probability of success \textit{immediately implies} high-probability guarantees (therefore, satisfying the conditions of the heavy-tailed model).

\begin{theorem}[Informal] \label{thm:heavy_from_adversarial_informal}
    For any constant $\eps \in (0, 0.1)$, there exists an absolute constant $c > 0$ such that any adversarially-robust estimation algorithm with no internal randomness satisfying \cref{def:adv_rob_est}, when given corruption parameter $\eps$ and clean sample $\bm{X}$ satisfies:
    \begin{equation*}
        \P \lbrb{\calA(\bX,\eps)\in G_\calD(\eps)} \geq 1- \exp (-c n).
    \end{equation*}
\end{theorem}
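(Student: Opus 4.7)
The plan is to prove the contrapositive via an isoperimetric blow-up argument on the product sample space. Because $\calA$ carries no internal randomness, the ``bad set'' of clean samples on which it fails,
\[
B \coloneqq \lbrb{\bm{x} : \calA(\bm{x}, \eps) \notin G_\calD(\eps)},
\]
is a well-defined subset of the $n$-fold sample space. Writing $\mu = \mc{D}^n$, the goal is to show $\mu(B) \leq \exp(-cn)$.

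The core idea is to convert any non-negligible mass of $B$ into an adversary that defeats $\calA$. Given a clean sample $\wt{\bm{X}}$, the adversary searches for a point $\bm{x}^\ast \in B$ differing from $\wt{\bm{X}}$ in at most $\eps n$ coordinates and, if such a point exists, returns $\bm{X}_\eps \coloneqq \bm{x}^\ast$. This is a legitimate $\eps$-corruption by definition of Hamming distance, and by determinism $\calA(\bm{x}^\ast, \eps) \notin G_\calD(\eps)$. Consequently $\calA$ fails whenever $\wt{\bm{X}}$ lies in the Hamming blow-up $B_{\eps n} \coloneqq \lbrb{\bm{y} : d_H(\bm{y}, B) \leq \eps n}$.

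The main analytic step is to lower bound $\mu(B_{\eps n})$ in terms of $\mu(B)$. I would invoke the standard blow-up inequality for product measures (a consequence of Marton's transportation inequality): for any measurable $A$,
\[
\mu \lprp{B_{\eps n}^c} \cdot \mu(B) \leq \exp \lprp{-2 \eps^2 n}.
\]
Hence if $\mu(B) \geq \exp(-cn)$ for any constant $c < 2 \eps^2$, then $\mu(B_{\eps n}) \geq 1 - \exp(-\Omega(n))$, which exceeds $1/10$ for $n$ sufficiently large. The adversary above would then force $\calA$ to err with probability exceeding $1/10$, contradicting the $9/10$ success guarantee of \cref{def:adv_rob_est}. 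The theorem follows with, e.g., $c = \eps^2$.

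The main obstacle I anticipate is purely in this analytic step: the sample space underlying $\mc{D}$ may be arbitrary (possibly continuous and high-dimensional), so one cannot directly appeal to Harper's isoperimetric inequality on the discrete Hamming cube, and must instead use the fully measure-theoretic Marton / Talagrand blow-up inequality. A secondary technical wrinkle is verifying that $B$, $B_{\eps n}$, and the adversary's ``nearest-point-in-$B$'' map are measurable; assuming a Polish sample space and invoking a measurable selection theorem handles this cleanly. The determinism hypothesis in the theorem statement is essential here, since otherwise the bad set $B$ would depend on the realization of $\calA$'s internal randomness and could not be directly exploited by a sample-only adversary.
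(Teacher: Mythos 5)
Your proposal is correct and is essentially the paper's argument stated in contrapositive form: the paper directly applies Hamming-metric concentration (Boucheron et al.\ Corollary 7.4) to the good set of measure $\geq 0.9$ and shows its blow-up (which by the triangle inequality is contained in the set where $\calA$ succeeds on the clean sample) has all but exponentially small mass, whereas you apply the same two-set blow-up inequality to the bad set $B$ and derive a contradiction with the $9/10$ guarantee via an explicit adversary mapping into $B$. The isoperimetric tool, the triangle-inequality step, and the role of determinism are identical in both; the only cosmetic difference is direct versus contrapositive phrasing.
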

As stated previously, we observe that this restriction on having no internal randomness is necessary for a simple reason: one can have an adversarially robust algorithm that just flips a biased coin such that 5 percent of the time it always fails. Clearly, this algorithm will not satisfy exponentially small tail bounds. Unless otherwise stated, we will assume in this paper that algorithms are deterministic and do not rely on any internal sources of randomness.

Note that in the language of black-box reductions, the \emph{trivial} reduction which passes the input to the algorithm directly suffices. Furthermore, the above result places no assumptions on how the adversarially robust algorithm is implemented as opposed to prior work which analyze \emph{specific} algorithms \cite{diakonikolasOutlierRobustMean2021,hopkinsRobustHeavyTailedMean2020} for the restricted setting of mean estimation. 

The proof of \cref{thm:heavy_from_adversarial_informal} rests on the following observation: if an algorithm produces an accurate estimate on $\bm{X}$, it must also produce an accurate estimate on data sets $\bm{X}'$ whose Hamming distance from $\bm{X}$ is at most $0.1n$. With this observation, \cref{thm:heavy_from_adversarial_informal} follows from classical results on isoperimetry and concentration to show that all but an exponentially small fraction (in terms of the product measure over $\mc{D}$) of possible datasets must be within $0.1n$ of the set of good samples.

To illustrate the power of this theorem, we apply it to the problems of mean estimation, covariance estimation, and linear regression, in each case showing that an optimal algorithm in the adversarial contamination model implies an algorithm that satisfies optimal high probability guarantees in the heavy-tailed model. Our results are presented in the extreme regime of constant corruption fraction $\eps$ (corresponding to $\delta \sim e^{-\Theta(n)}$).

\paragraph{Mean estimation.}

Our first application is the canonical estimation problem of high-dimensional mean estimation. Here, we are given access to $n$ i.i.d samples from a distribution with mean $\mu$ and covariance $\Sigma$ with the goal of optimally estimating $\mu$ in Euclidean norm. Perhaps surprisingly, only recently, a long line of work in the computer science literature \cite{diakonikolas2016robust,lai2016agnostic,diakonikolas2017being,deplecmean,hopkinsRobustHeavyTailedMean2020,diakonikolasOutlierRobustMean2021} has culminated in the development of statistically and computationally efficient, sometimes \emph{near-linear} time, estimators satisfying the following optimal guarantee for mean estimation.

\begin{definition}[Optimal Adversarially Robust Mean Estimator]
    \label{def:opt_adv_rob_est_mean}
    We say $\mc{A}$ is an optimal adversarially robust mean estimator if given any $\eps \in [0, 1/10]$, and any $\eps$-corrupted sample of $\wt{\bm{X}}$, drawn i.i.d from $\mc{D}$ produces an estimate $\wh{\mu}$ satisfying:
    \begin{equation*}
        \P_{\bm{X}} \lbrb{\norm{\wh{\mu} - \mu} \leq C \lprp{\sqrt{\frac{\Tr (\Sigma)}{n}} + \sqrt{\norm{\Sigma} \eps}}} \geq \frac{9}{10}
    \end{equation*}
    for some absolute constant $C > 0$.
\end{definition}

An important note here is that, in contrast with the heavy-tailed model to follow, we only ask for a \textit{constant probability guarantee}. The problem isn't getting a high probability bound, it's to get any bound at all.

In contrast, in the heavy-tailed model, some finite bounds are possible on the performance of the empirical mean in this setting. It can be shown that the following bounds are tight for the empirical mean, denoted by $\wh{\mu}_{\mrm{EM}}$:
\begin{equation}
    \E \lsrs{\norm{\wh{\mu}_{\mrm{EM}} - \mu}^2} \leq \frac{\Tr (\Sigma)}{n} \text{ and } \P \lbrb{\norm{\wh{\mu}_{\mrm{EM}} - \mu} \leq \sqrt{\frac{\Tr (\Sigma)}{n\delta}}} \geq 1 - \delta.
\end{equation}
While the first in-expectation result is the optimal achievable, the second is substantially worse than the guarantees obtained in more restricted settings, such as when the data is obtained from a \emph{Gaussian} distribution. In a surprising development, the seminal work of Lugosi and Mendelson \cite{lugosi2017sub} shows that such rates are in fact statistically achievable for the bounded covariance setting as outlined in the following definition:
\begin{definition}
    \label{def:opt_ht_est}
    We say $\mc{A}$ is an optimal heavy-tailed mean estimator if given any $\delta \in [e^{-cn}, 1/2]$, and sample $\bm{X} = \{X_i\}_{i = 1}^n$ drawn i.i.d from $\mc{D}$ returns an estimate, $\wh{\mu}$, satisfying:
    \begin{equation*}
        \P_{\bm{X}, \mc{A}} \lbrb{\norm{\wh{\mu} - \mu} \leq C \lprp{\sqrt{\frac{\Tr (\Sigma)}{n}} + \sqrt{\frac{\norm{\Sigma} \log (1 / \delta)}{n}}}} \geq 1 - \delta
    \end{equation*}
    for some absolute constants $C, c > 0$.
\end{definition}
By comparing the above rates with those of the empirical mean, we see that the dependence on $\delta$ has improved exponentially and furthermore, is decoupled from the dimension term, $\Tr (\Sigma)$. In fact, the rate in \cref{def:opt_ht_est} is known to be unimprovable even if the data were indeed \emph{Gaussian}. That such a rate is achievable in one dimension was independently discovered by several groups \cite{NemYud83,jerrum1986random,alon1999space}. Surprisingly, the extension of these results to high dimensions was only achieved recently by Lugosi and Mendelson \cite{lugosi2017sub}, who designed an inefficient estimator achieving this guarantee. Soon after, Hopkins \cite{hopkins2018sub} devised the first computationally efficient estimator recovering the same guarantees.

We observe that the optimal rates in the \textit{adversarial} model and the \textit{heavy tailed} model coincide under the mapping $\eps \leftrightarrow \frac{\log(1/\delta)}{n}$. In the regime of constant $\eps$ (corresponding to $\delta \sim e^{-\Theta(n)}$), \cref{thm:heavy_from_adversarial_informal} 
implies the surprising corollary that any optimal adversarially robust mean estimation algorithm automatically gives an optimal estimator in the heavy-tailed model.
\begin{corollary}\label{cor:heavy_from_adversarial_informal_mean}
    In the regime of constant $\eps$ and $\delta$ inverse exponential in $n$, any optimal algorithm for mean estimation under adversarial contamination is also optimal in the heavy-tailed contamination model.
\end{corollary}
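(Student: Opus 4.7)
The plan is to apply Theorem \ref{thm:heavy_from_adversarial_informal} directly to any algorithm $\calA$ satisfying Definition \ref{def:opt_adv_rob_est_mean} and verify that, in the stated regime, the resulting high-probability bound matches the heavy-tailed guarantee of Definition \ref{def:opt_ht_est} up to absolute constants. No further probabilistic argument is required; the content of the corollary is entirely a matter of reconciling the two rate expressions.

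Fix a constant $\eps \in (0, 1/10)$. By Definition \ref{def:opt_adv_rob_est_mean}, the acceptable set for corruption level $\eps$ is
\[
    G_\calD(\eps) = \lbrb{\wh{\mu} : \norm{\wh{\mu} - \mu} \leq C \lprp{\sqrt{\frac{\Tr(\Sigma)}{n}} + \sqrt{\norm{\Sigma}\eps}}}.
\]
Applying Theorem \ref{thm:heavy_from_adversarial_informal} to $\calA$ with this choice of $\eps$ produces an absolute constant $c > 0$ such that, on the clean sample $\bm{X}$ drawn i.i.d.\ from $\calD$, $\P_{\bm{X}} \lbrb{\calA(\bm{X}, \eps) \in G_\calD(\eps)} \geq 1 - \exp(-cn)$.

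Now set $\delta \coloneqq \exp(-cn)$, which is inverse-exponential in $n$. In this regime, $\log(1/\delta)/n = c$ and $\eps$ are both positive absolute constants, so $\sqrt{\norm{\Sigma}\eps}$ and $\sqrt{\norm{\Sigma}\log(1/\delta)/n}$ agree up to an absolute constant factor. Unfolding the definition of $G_\calD(\eps)$ turns the conclusion of Theorem \ref{thm:heavy_from_adversarial_informal} into
\[
    \P_{\bm{X}} \lbrb{\norm{\wh{\mu} - \mu} \leq C' \lprp{\sqrt{\frac{\Tr(\Sigma)}{n}} + \sqrt{\frac{\norm{\Sigma}\log(1/\delta)}{n}}}} \geq 1 - \delta,
\]
which is precisely the optimal heavy-tailed guarantee of Definition \ref{def:opt_ht_est} with a slightly larger universal constant.

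The only nontrivial ingredient is Theorem \ref{thm:heavy_from_adversarial_informal} itself, whose proof rests on the isoperimetric observation that almost every i.i.d.\ sample from $\calD$ lies within Hamming distance $\eps n$ of some dataset on which $\calA$ succeeds. There is thus no genuine obstacle in deducing the corollary; if one wishes to cover a continuous range of $\delta$ rather than the single value $\exp(-cn)$, one simply runs the above argument at each admissible corruption level $\eps$ and takes the envelope of the resulting guarantees.
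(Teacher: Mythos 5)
Your proof is correct and matches the paper's approach exactly: the paper also establishes the corollary by instantiating \cref{def:adv_rob_est} and \cref{thm:heavy_from_adversarial_informal} with $G_\calD(\eps)=\{\hat\mu : \|\hat\mu-\mu\|\leq C(\sqrt{\Tr(\Sigma)/n}+\sqrt{\|\Sigma\|\eps})\}$ and then observing that, in the constant-$\eps$ regime, $\sqrt{\|\Sigma\|\eps}$ and $\sqrt{\|\Sigma\|\log(1/\delta)/n}$ coincide up to absolute constants once $\delta=e^{-cn}$. The paper leaves this as a one-line remark; you have simply spelled out the same calculation.
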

This follows by instantiating \cref{def:adv_rob_est} and \cref{thm:heavy_from_adversarial_informal} with $\calZ=\mb{R}^d$ and $$G_\calD(\eps)=\lbrb{\hat{\mu}\in \mb{R}^d : \|\hat\mu-\mu\|\leq C\lprp{\sqrt{\frac{\Tr (\Sigma)}{n}} + \sqrt{\norm{\Sigma} \eps}}}$$

For the remaining applications, we'll leave implicit that we consider $\eps$ to be a constant.
\paragraph{Covariance estimation.}
We now consider an application of \cref{thm:heavy_from_adversarial_informal} to covariance estimation, another fundamental statistical estimation task. Here, we are given $n$ i.i.d samples from a high-dimensional distribution with the goal of recovering the covariance matrix of the distribution in spectral norm. We instantiate this task with an $L_4-L_2$ hypercontractivity assumption, standard for high-dimensional heavy-tailed covariance estimation \cite{mzcovariance,azcov}. We formally state this assumption below:
\begin{definition}
    \label{def:l4l2}
    We say that a zero mean random vector $X$ with covariance $\Sigma$ satisfies an $L_4-L_2$ hypercontractivity assumption if:
    \begin{equation*}
        \forall \norm{v} = 1: \lprp{\E [\inp{X}{v}^4]}^{1 / 4} \leq C \sqrt{v^\top \Sigma v}
    \end{equation*}
    for an absolute constant $C > 0$. 
\end{definition}
Note that no further assumptions are made on the distribution of $X$ beyond its $4^{th}$ moments allowing for the existence of heavy-tailed outliers. For this setting, there exist \emph{optimal} estimators \cite{azcov} which when given $n$ i.i.d samples, with $\eps$ fraction arbitrarily corrupted by an adversary, produce estimate, $\wh{\Sigma}$ satisfying with probability at least $1 - \delta$:
\begin{equation*}
    \norm*{\wh{\Sigma} - \Sigma} \leq C\lprp{\sqrt{\frac{\Tr (\Sigma) + \norm{\Sigma} \log (1 / \delta)}{n}} + \sqrt{\norm{\Sigma} \eps}}
\end{equation*}
which is known to be optimal even for the heavy-tailed ($\eps = 0$) and adversarial settings ($\delta$ set to a constant). Applying \cref{thm:heavy_from_adversarial_informal} to the covariance estimation setting, we establish the surprising fact that any optimal algorithm for covariance estimation with adversarial contamination is optimal for heavy-tailed contaminations as well.
\begin{corollary}\label{cor:heavy_from_adversarial_cov}
    Any optimal algorithm for covariance estimation under \cref{def:l4l2} with adversarial contamination is also optimal in the heavy-tailed contamination model.
\end{corollary}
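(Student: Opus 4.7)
The plan is to mirror the argument given for \cref{cor:heavy_from_adversarial_informal_mean}, instantiating the generic framework of \cref{def:adv_rob_est} and \cref{thm:heavy_from_adversarial_informal} with the covariance estimation problem under the $L_4$--$L_2$ hypercontractivity assumption of \cref{def:l4l2}. First, I would take the solution space $\calZ$ to be the set of symmetric $d \times d$ matrices equipped with the spectral norm. Operating in the regime of constant $\eps$ (so that the target heavy-tailed failure probability is $\delta \sim e^{-\Theta(n)}$), I would define the set of acceptable solutions as
\[
    G_\calD(\eps) = \lbrb{\wh{\Sigma} \in \calZ : \norm{\wh{\Sigma} - \Sigma} \leq C \lprp{\sqrt{\frac{\Tr(\Sigma)}{n}} + \sqrt{\norm{\Sigma}\eps}}}
\]
with the constant $C$ taken from the optimal adversarial covariance estimator's guarantee (obtained by collapsing the $\log(1/\delta)$ factor in the rate of \cite{azcov} to a constant). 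By hypothesis, such an optimal algorithm $\calA$ satisfies $\P_{\bX}[\calA(\bX_\eps; \eps) \in G_\calD(\eps)] \geq 9/10$, exactly the requirement of \cref{def:adv_rob_est}.

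Second, I would directly invoke \cref{thm:heavy_from_adversarial_informal} on the clean sample $\bX$, upgrading the constant-probability adversarial guarantee to $\P_{\bX}[\calA(\bX; \eps) \in G_\calD(\eps)] \geq 1 - e^{-cn}$ for some absolute $c > 0$. Setting $\delta := e^{-cn}$, this asserts that, with probability at least $1 - \delta$ over the clean sample, the output $\wh{\Sigma}$ obeys the spectral-norm bound defining $G_\calD(\eps)$. To conclude optimality in the heavy-tailed model, I would compare this with the optimal heavy-tailed rate
\[
    \sqrt{\frac{\Tr(\Sigma) + \norm{\Sigma}\log(1/\delta)}{n}} + \sqrt{\norm{\Sigma}\eps}.
\]
Because $\log(1/\delta)/n = c$ and $\eps$ are both constants in this regime, the term $\sqrt{\norm{\Sigma}\log(1/\delta)/n}$ is absorbed into $\sqrt{\norm{\Sigma}\eps}$ up to absolute constants, so the bound certified by $G_\calD(\eps)$ matches the optimal heavy-tailed rate.

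The main obstacle is essentially bookkeeping: verifying that the adversarial confidence set $G_\calD(\eps)$ and the target heavy-tailed confidence set at $\delta = e^{-cn}$ agree up to absolute constants under the correspondence $\eps \leftrightarrow \log(1/\delta)/n$, exactly as in the mean estimation case. No new technical machinery beyond \cref{thm:heavy_from_adversarial_informal} and the known matching lower bounds for heavy-tailed covariance estimation under \cref{def:l4l2} is needed; the hypercontractivity assumption enters only through its role in guaranteeing the existence of an optimal adversarial estimator with the claimed rate.
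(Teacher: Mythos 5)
Your proposal is correct and matches the paper's (implicit) argument: the paper treats this corollary as a direct instantiation of \cref{thm:heavy_from_adversarial_informal} with $\calZ$ the space of symmetric matrices and $G_\calD(\eps)$ the spectral-norm confidence set at the adversarial rate, exactly as in the mean-estimation case. Your bookkeeping showing that the adversarial rate from \cite{azcov} and the heavy-tailed rate coincide up to absolute constants in the regime $\eps = \Theta(1)$, $\delta = e^{-\Theta(n)}$ is the correct and complete verification.
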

\paragraph{Linear regression.} Our final application is to linear regression. We present a simplified setting for illustrative purposes. However, our results are also applicable more generally. Here, we consider the setting where we are given i.i.d samples generated from a distribution $\calD$ with $(X, Y) \thicksim \calD$ generated as follows:
\begin{equation*}
    Y = \inp{X}{w^*} + \eta
\end{equation*}
where $X$ has zero-mean, covariance $I$, and satisfies \cref{def:l4l2} and $\eta$ is zero-mean, independent of $X$, and has variance bounded by $1$. Our goal now is to recover the unknown parameter $w^*$\footnote{There exist alternative objectives for linear regression such as prediction error. We restrict to the simpler objective of parameter recovery for the sake of exposition. However, our results are also applicable to these alternative objectives.}. Here, again, there exist estimates \cite{lugosi2018risk}, $\wh{w}$, satisfying with probability at least $1 - \delta$:
\begin{equation*}
    \norm{\wh{w} - w^*} \leq C \lprp{\sqrt{\frac{d + \log (1 / \delta)}{n}} + \sqrt{\eps}}
\end{equation*}
which are again known to be optimal for both the adversarial ($\delta$ set to a constant) and heavy-tailed settings ($\eps = 0$). Hence, \cref{thm:heavy_from_adversarial_informal} again yields the following corollary:
\begin{corollary}
    \label{cor:heavy_from_adv_linear_regression}
    Any optimal estimator for linear regression with adversarial contamination is also optimal in the heavy-tailed contamination model.
\end{corollary}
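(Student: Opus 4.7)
The plan is to apply \cref{thm:heavy_from_adversarial_informal} in direct analogy with the mean estimation case (\cref{cor:heavy_from_adversarial_informal_mean}). The regression data $\{(X_i, Y_i)\}_{i = 1}^n$ is drawn i.i.d.\ from $\calD$ by assumption, so the sample lies in the product space $(\mb{R}^d \times \mb{R})^n$ and fits the hypothesis of \cref{def:adversarial_model} without modification.

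First, I would instantiate \cref{def:adv_rob_est} with $\calZ = \mb{R}^d$ and
\begin{equation*}
    G_\calD(\eps) = \lbrb{\wh{w} \in \mb{R}^d : \norm{\wh{w} - w^*} \leq C \lprp{\sqrt{\frac{d}{n}} + \sqrt{\eps}}},
\end{equation*}
which is exactly the optimal adversarial recovery guarantee for linear regression with constant failure probability. Under this choice, the hypothesis of the corollary — that $\calA$ is an optimal adversarially robust linear regression estimator — asserts precisely that $\calA$ satisfies \cref{def:adv_rob_est} for every $\eps \in [0, 1/10]$.

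Next, I would apply \cref{thm:heavy_from_adversarial_informal} at a fixed constant $\eps \in (0, 0.1)$, which upgrades the $9/10$ success probability on $\eps$-corrupted data to the high-probability statement $\P\lbrb{\calA(\bm{X}, \eps) \in G_\calD(\eps)} \geq 1 - \exp(-cn)$ on a clean sample. Finally, I would translate this into the optimal heavy-tailed rate via the correspondence $\eps \leftrightarrow \log(1/\delta)/n$: for any $\delta$ inverse exponential in $n$ and $\eps$ a constant chosen so that $\log(1/\delta) \leq cn$, the adversarial bound $\sqrt{d/n} + \sqrt{\eps}$ majorizes the target heavy-tailed rate $\sqrt{(d + \log(1/\delta))/n} + \sqrt{\eps}$, and the probability bound supplied by \cref{thm:heavy_from_adversarial_informal} is at least $1 - \delta$.

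I do not anticipate any genuine obstacle; the argument is a cosmetic adaptation of the proof sketched for mean estimation, and all the analytic content — the isoperimetric upgrade from constant to exponentially small failure probability — is already encapsulated in \cref{thm:heavy_from_adversarial_informal}. The only item requiring a moment of care is verifying that the adversarial rate for linear regression scales the same way as for mean estimation under the $\eps \leftrightarrow \log(1/\delta)/n$ mapping, so that no mismatch arises between the $G_\calD(\eps)$ used by the theorem and the optimal heavy-tailed confidence set.
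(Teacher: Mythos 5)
Your proposal is correct and follows exactly the same route the paper uses (and spells out explicitly for the mean estimation case in \cref{cor:heavy_from_adversarial_informal_mean}): instantiate \cref{def:adv_rob_est} with $\calZ = \mb{R}^d$ and $G_\calD(\eps)$ equal to the optimal adversarial confidence set for linear regression, invoke \cref{thm:heavy_from_adversarial_informal} to upgrade the $9/10$ success probability to $1 - e^{-cn}$, and conclude via the correspondence $\eps \leftrightarrow \log(1/\delta)/n$. The only cosmetic slip is that the heavy-tailed target rate should read $\sqrt{(d + \log(1/\delta))/n}$ with no $\sqrt{\eps}$ term (there is no adversary in the heavy-tailed model), but since $\sqrt{d/n} + \sqrt{\log(1/\delta)/n}$ dominates that quantity up to constants, your conclusion is unaffected.
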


\subsubsection{The adversarial contamination model is strictly stronger.}
\label{sssec:adv_stronger}
\cref{thm:heavy_from_adversarial_informal} prompts the natural question of whether the two models are equivalent? That is, are algorithms for the heavy-tailed setting automatically adversarially robust? We show in a strong sense that the answer to this question is \textbf{no}. The following result shows that, even for the simplest high-dimensional estimation problem of \emph{mean estimation}, \emph{any} reduction from adversarial to heavy-tailed estimation must either remove \emph{most} outliers in the data or must ensure that their \emph{empirical mean} is close to the true mean and furthermore, that their \emph{variance} are small. That is, the reduction itself has to do a significant amount of heavy lifting.

Before we proceed, we define the class of black-box reductions our results apply to below:
\begin{definition}
    \label{def:black_box_reduction}
    For two statistical estimation problems, $P$ and $P'$, we say that $\mc{R}$ is a black box reduction from $P'$ to $P$ if for any estimator $\mathcal{A}$ for $P$, $\mathcal{A} (\mathcal{R} (\cdot))$ is an estimator for $P'$.
\end{definition}
An illustration of this definition is provided in \cref{fig:black_box_reduction}. Here, $\bm{X}$ denotes a sample from the estimation problem $P'$. $\bm{X}$ is then used by the reduction $\mc{R}$ to produce another sample $\bm{Y}$ which is input to the estimator $\mathcal{A}$ for $P$ to produce the final output $\wh{\theta}$. In our context, $P$ and $P'$ typically denote corresponding heavy-tailed and adversarial contamination estimation problems.

We pause for a few remarks on \cref{def:black_box_reduction}. Firstly, note that it captures the setting of \cref{thm:heavy_from_adversarial_informal}. Indeed, \cref{thm:heavy_from_adversarial_informal} establishes that the \emph{trivial} reduction which simply outputs its input, $\bm{X}$, suffices as a black-box reduction from heavy-tailed to adversarially robust estimation. On the other hand, \cref{def:black_box_reduction} places no restrictions on \emph{how} the intermediate output $\bm{Y}$ is produced from $\mc{R}$. Nevertheless, our results establish strong structural properties on $\bm{Y}$ that must be satisfied for \emph{any} black-box reduction from adversarially-robust to heavy-tailed mean estimation. Lastly, observe that the requirements of a black-box reduction are agnostic to the specific estimation algorithms used to complete the reduction. Hence, they allow for a formal investigation into the relationship between different \emph{corruption models}. 

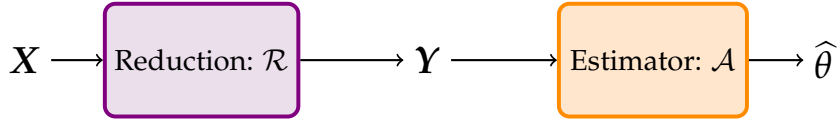
\begin{figure}
\centering
\begin{tikzpicture}[thick, rounded corners]

\node[draw=darkpurple, fill=pastelpurple!50, minimum width=2.5cm, minimum height=1.5cm, rounded corners, line width=1.5pt] (box1) at (0,0) {Reduction: $\mathcal{R}$};

\node[draw=darkorange, fill=pastelorange!50, minimum width=2.5cm, minimum height=1.5cm, rounded corners, line width=1.5pt] (box2) at (6,0) {Estimator: $\mathcal{A}$};

\draw[->] (-2,0) node[left] {\Large $\bm{X}$} -- (box1);

\node (tildeX) at (3,0) {\Large $\bm{Y}$};
\draw[->] (box1) -- (tildeX);

\draw[->] (tildeX) -- (box2);

\draw[->] (box2) -- (8,0) node[right] {\Large $\wh{\theta}$};

\end{tikzpicture}
\caption{Illustration of the class of reductions \cref{thm:sub_g_informal} applies to. The input dataset $\bm{X}$ is processed by the reduction, $\mc{R}$ to produce another dataset $\bm{Y}$ that is ultimately used as input to the estimator $\mc{A}$ which produces the final output $\wh{\theta}$.}
\label{fig:black_box_reduction}
\end{figure}

The exposition of our result additionally requires the concept of \emph{stability}, a core component of recent adversarially robust estimation algorithms \cite{diakonikolas2019recent,dkp20,hlz20}:
\begin{definition}
    \label{def:stability}
    A finite set $S = \{y_i\}_{i = 1}^n \subset \R^d$, is $(\gamma, \nu)$-stable with respect to $\mu \in \R^d$ for $\gamma > 0, \nu \in (0, 1/10)$ if there exists $S' \subset S$ with $\abs{S'} \geq (1 - \nu) n$ such that:
    \begin{equation*}
        \norm*{\frac{1}{\abs{S'}} \sum_{i \in S'} y_i - \mu} \leq \gamma \text{ and } \norm*{\frac{1}{\abs{S'}} \sum_{i \in S'} (y_i - \mu)(y_i - \mu)^\top} \leq \gamma^2. 
    \end{equation*}
\end{definition}
The existence of such stable sets plays a key role in designing adversarially robust estimators. They may be shown to exist with high-probability for a chosen subset of the \emph{inlier} data points. Hence, in the adversarial contamination setting, $\nu > \eps$. In \cite{dkp20}, it is shown that any \emph{stability-based} mean-estimation algorithm is simultaneously robust to adversarial and heavy-tailed corruptions. In fact, virtually all known adversarially robust estimators rely on such stability properties of the inlier data points and typically operate by iteratively pruning the dataset until one is left with a large stable subset of the data whose mean is a good estimate of the population mean. Conversely, the presence of (almost optimal) heavy-tailed estimators that do \emph{not} rely on stability \cite{catoni2018dimension} is suggestive that heavy-tailed outliers are qualitatively weaker than adversarial outliers. 

The main technical result of this section provides a formal justification for the separation between adversarial and heavy-tailed corruptions through the lens of black-box reductions. Concretely, we show that \emph{any} black-box reduction from adversarial to heavy-tailed estimation must produce pointsets with arbitrarily large stable sets.
\begin{theorem}[Informal -- see \cref{thm:gen_conc}]\label{thm:sub_g_informal}
    \emph{Any} black-box reduction (\cref{def:black_box_reduction}) from adversarially robust mean estimation (with $\eps = 0.1$) to heavy-tailed mean estimation (with $\log (1 / \delta) = \Omega (n)$), must produce a pointset $\bm{Z} = \{z_i\}_{i = 1}^m$ with $\mc{I} \subset [m]$ satisfying $\abs{\mc{I}} \geq 0.99m$ and:
    \begin{equation*}
        \norm*{\frac{1}{\abs{\mc{I}}} \sum_{i \in \mc{I}} z_i - \mu} \leq C \lprp{\sqrt{\frac{\Tr (\Sigma)}{n}} + \sqrt{\norm{\Sigma}}} \text{ and } \norm*{\frac{1}{\abs{\mc{I}}} \sum_{i \in \mc{I}} (z_i - \mu) (z_i - \mu)^\top} \leq C \lprp{\frac{\Tr (\Sigma)}{n} + \norm{\Sigma}}.
    \end{equation*}
    That is, this stable set must account for all but a small-constant fraction of the outliers.
\end{theorem}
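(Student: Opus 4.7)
The plan is to argue by contraposition. Given any putative black-box reduction $\mathcal{R}$ from adversarially robust to heavy-tailed mean estimation, I would construct a specific heavy-tailed estimator $\mathcal{A}^{*}$ whose output on an input $\bm{Y}$ can lie within the target error of $\mu$ only if $\bm{Y}$ already admits the $(\gamma, 0.01)$-stable subset described in the theorem, with $\gamma = C\lprp{\sqrt{\Tr(\Sigma)/n} + \sqrt{\norm{\Sigma}}}$ matching the sub-Gaussian rate at $\log(1/\delta) = \Theta(n)$. Since $\mathcal{R}$ is assumed valid, the composed algorithm $\mathcal{A}^{*} \circ \mathcal{R}$ must satisfy \cref{def:adv_rob_est} on $\eps$-corrupted inputs with $\eps = 0.1$; by design of $\mathcal{A}^{*}$, this forces $\mathcal{R}(\bm{X}_\eps)$ to carry precisely the stable subset the theorem promises, completing the contrapositive.

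Concretely, $\mathcal{A}^{*}$ is a quantile-smoothed stability seeker. On input $\bm{Y} = \{y_i\}_{i=1}^m$ it searches (implicitly) over subsets $S' \subseteq [m]$ of size at least $0.99 m$ for one whose empirical mean and covariance satisfy the inequalities in \cref{def:stability} with parameter $\gamma$; if such a subset is found it outputs the empirical mean of $S'$, and otherwise it outputs a fixed vector deliberately far from any plausible $\mu$. A standard stability-calculus argument (intersecting two candidate subsets $S'_1 \cap S'_2$, which still has size at least $0.98 m$, and comparing the two empirical means on this common set) shows that any two qualifying subsets yield means within $O(\gamma)$ of each other, so the output is well-defined at the target accuracy, and it is close to $\mu$ if and only if the input actually carries the requisite stable subset.

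To certify that $\mathcal{A}^{*}$ meets \cref{def:opt_ht_est}, I would establish a novel structural result for i.i.d. samples from any bounded-covariance distribution: with probability at least $1 - \delta$, for every unit direction $v$ the order statistics of the projected sample $\inp{X_i}{v}$ obey a logarithmic tail-decay condition whose scale matches $\log(1/\delta)$ balanced against the quantile depth. Uniformizing this across directions via a carefully balanced, quantile-dependent net argument (rather than a naive direction-wise union bound, which would lose a dimensional factor) implies that dropping a $0.01$ fraction of the sample yields a $(\gamma, 0.01)$-stable set around $\mu$ at the sub-Gaussian rate. Consequently $\mathcal{A}^{*}$ locates a valid stable subset on heavy-tailed inputs and outputs an estimate at the optimal rate with probability at least $1 - \delta$.

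The hard part is establishing the tail-decay condition simultaneously at every quantile depth and direction without paying a dimensional penalty: standard resilience-style concentration is too coarse at the deep-quantile scale, and sub-Gaussian concentration is unavailable because only a second-moment assumption is made. The design of $\mathcal{A}^{*}$ must also be synchronized with this bound, loose enough that heavy-tailed samples provably satisfy it but strict enough that any output of $\mathcal{R}$ lacking the promised stable subset is certified to fail; this matched pairing of the quantile-smoothed estimator with the tail-decay structural result is the core technical content of the argument.
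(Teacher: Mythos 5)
Your high-level contraposition strategy matches the paper's: construct a heavy-tailed estimator that achieves sub-Gaussian rates via a logarithmic tail-decay structural result, then argue that for the composition $\mathcal{A}^*\circ\mathcal{R}$ to be adversarially robust, $\mathcal{R}$ must output pointsets with large stable subsets. The tail-decay condition you identify is essentially the paper's \cref{lem:quant_conc}, though the paper establishes it via Bousquet's inequality for empirical-process suprema while you propose an undeveloped quantile-dependent net argument.

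However, there is a real gap in your estimator design. You propose a ``stability seeker'' that exhaustively searches for a $(\gamma, 0.01)$-stable subset and, failing that, outputs ``a fixed vector deliberately far from any plausible $\mu$.'' The estimator does not know $\mu$, so no deterministic fixed vector can be guaranteed far from $\mu$ across all target distributions. A reduction $\mathcal{R}$ could in principle produce stable-subset-free pointsets precisely when its own internal estimate of $\mu$ happens to be near your fallback vector, rendering your certificate of failure vacuous. Since you exclude internal randomness (as the paper does), you cannot randomize the fallback away either. The paper's \cref{alg:sub_g_main} elegantly avoids this: it always outputs $\wt{\mu} + s\sqrt{\eps}\,\sigma_v v$ for \emph{both} $s\in\{+1,-1\}$, and since both variants are valid heavy-tailed estimators, the reduction must make both succeed. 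The parallelogram law then deterministically forces $\sigma_v^2 \leq \eta^2/\eps$ (\cref{thm:gen_conc}), which via the spread definition and the sharpened Gaussian rounding (\cref{lem:sdp_rounding}) forces the stable set to exist. This is an unconditional implication about the algorithm's inputs, requiring no assumption that a fallback vector is far from $\mu$. To repair your argument you would need a mechanism of this kind --- a pair (or family) of estimators whose outputs cannot all simultaneously land near the same point unless the input has small spread.

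A secondary omission: to make the intermediate stability-based estimate $\wt{\mu}$ achieve the sub-Gaussian rate, the paper had to sharpen both the Gaussian rounding of Depersin--Lecu\'e (\cref{lem:sdp_rounding}) and the Lugosi--Mendelson truncation bounds (\cref{lem:spec_norm_bnd}, \cref{lem:trunc_mean_bnd}); prior analyses lose a logarithmic factor. Your exhaustive-subset-search variant may sidestep the SDP entirely and thus avoid \cref{lem:sdp_rounding}, but you would still need the truncation bounds and the stability-calculus comparison (\cref{lem:tv_var_mn_bnd}) to control the intersection argument you invoke, and none of this is worked out.
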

We briefly remark on \cref{thm:sub_g_informal} to simplify the interpretation of its implications. As previously stated, adversarially corrupted pointsets are only guaranteed to contain stable sets (with high probability) of size at most $0.9n$\footnote{For $\eps = 0.1$, it is easy to construct adversarial contaminations such that all subsets larger than $0.9n$ are \emph{not} stable.}. Hence, the pointsets that are produced by \emph{any} black-box adversarial-to-heavy-tailed reduction are qualitatively different from adversarially corrupted datasets. Furthermore, several algorithmic approaches for adversarially robust estimation rely on the extraction of a large stable subset of the input point set. Hence, the above result establishes that any such reduction must already produce a pointset, essentially all of which is a stable subset. Consequently, in the setting where the reduction passes \emph{subsets} of the input to the heavy-tailed estimation algorithm, it must either \emph{filter} out adversarial points or alternatively, ensure that all but a \emph{negligible fraction} of them are part of a stable set. Hence, the reduction itself operates similarly to existing stability-based algorithms for adversarially robust estimation.

To establish \cref{thm:sub_g_informal}, we construct the novel heavy-tailed estimator that provides the guarantees of \cref{thm:heavy_from_adversarial_informal}. The design of our estimator leverages a novel tail-decay condition that is guaranteed to hold when the data is drawn from a distribution but does \emph{not} hold when an adversary engineers a large fraction of the outlier set to deviate from the mean. The estimator then estimates a quantile \emph{smoothed} notion of spread over all one-dimensional projections of the data and adds a random perturbation of this magnitude to the output of a standard heavy-tailed estimator. However, care must be taken in the choice of the underlying estimator. To avoid cancellations in magnitude to establish \cref{thm:sub_g_informal} on the points provided to the algorithm as \emph{input}, we use a standard stability-based framework for robust estimation \cite{steinhardt2017resilience}. Unfortunately, these estimators are not known to be optimal heavy-tailed estimators with prior analyses incurring an additional multiplicative logarithmic factor \cite{dkp20}. We provide a novel analysis of this estimator by providing a fine-grained analysis of a Gaussian rounding scheme previously used to analyze a median-of-means approach \cite{deplecmean} and leveraging recent developments in the analysis of the trimmed median \cite{lmtrimmed}.

\paragraph*{Notation.} In this paper, $\R$ and $\N$ denote the set of real and natural numbers respectively. For $n \in \N$, $[n]$ denotes the set of integers from $1$ through $n$. For $d, m, n \in \N$, $\R^d$ and $\R^{m \times n}$ denote the set of $d$-dimensional vectors and $m \times n$ dimensional matrices respectively. For $v \in \R^d$ and $M \in \R^{d \times d}$, $\norm{v}$ and $\norm{M}$ are the standard Euclidean and spectral norms respectively while $\Tr (M)$ denotes the trace of $M$. For measure $\mc{D}$ and $n \in \N$, $\mc{D}^{\otimes n}$ denotes the $n$-time produce measure of $\mc{D}$. For a set $T \subset \R^d$, $\mrm{mean} (T)$ denotes the mean of the uniform distribution over the set and when $d = 1$, $\mrm{Med} (T)$ denotes its median. For $x \in \R$, $\sgn(x)$ denotes the signum function of $x$. For distributions $\mc{D}_1, \mc{D}_2$, $\mrm{TV} (\mc{D}_1, \mc{D}_2)$ denotes the TV distance between them. For $v \in \R^d$, $v_i$ denotes the $i^{th}$ entry of $v$. For $\mc{S} \subset \R^d$, $\mrm{Unif} (\mc{S})$ denotes the uniform distribution over $\mc{S}$. Finally, for an event $E$, $\bm{1} \lbrb{E}$ denotes the indicator of that event.

\paragraph{Organization.} In the remainder of the paper, we establish \cref{thm:heavy_from_adversarial}, the formalization of \cref{thm:heavy_from_adversarial_informal}. Subsequently, in \cref{sec:sub_g}, we design and analyze the sub-Gaussian estimator used in the proof of \cref{thm:sub_g_informal} whose generalized counterpart is \cref{thm:gen_conc}.

\section{From heavy-tailed to adversarially robust X estimation}\label{sec:heavy_from_adversarial}

Here, we prove that an algorithm, robust to a constant factor of adversarial corruptions, with no additional assumptions required and no additional work, automatically achieves sub-Gaussian guarantees. The argument reduces to a statement about isoperimetry in the hamming metric. Recall that in the adversarial corruption setting, we require that our algorithm be robust to arbitrary $\eps n$ sized corruptions of the sample with constant probability. That is, there is some set with constant mass such that, for any point within $\eps n$ hamming distance of this set, the algorithm outputs a ``good" mean estimate. The question becomes, what is the measure of this $\eps n$ hamming ball when the data is drawn from a heavy-tailed distribution? Classical results on the connection between isoperimetry and concentration show that this is all but an exponentially small fraction.

For an abstract space, $\mc{X}$, consider the set of possible samples obtained from $n$ iid draws from a distribution over $\mc{X}$, $\mc{X}^n$. For instance, in our setting $\mc{X} = \R^d$. For two possible sample sets, $x, y \in \mc{X}^n$ and $S \subset \mc{X}^n$, define the Hamming distance as follows:
\begin{equation*}
    \dist_{\mrm{ham}}(x,y):=\sum_i \mathbb{1}[x_i-y_i\neq 0] \text{ and } \dist_{\mrm{ham}}(x,S)=\min_{y\in S}\dist_{\mrm{ham}}(x,y).
\end{equation*}

We recall the following fact about expansion with respect to $\dist_{\mrm{ham}}$.
\begin{lemma}[\cite{boucheron2013concentration} Corollary 7.4]\label{lem:boucheron}
    Let $S\subseteq \calX^n$ and $\nu$ a measure on $\calX$ s.t. $\nu^{\otimes n}(S) = p$. We have: 
    \begin{equation*}
        \forall t \geq 0: \P\lbrb{\dist_{\mrm{ham}}(X,S)\geq t + \sqrt{\frac{n}{2}\log(1/p)}}\leq e^{-2t^2/n}.
    \end{equation*}
\end{lemma}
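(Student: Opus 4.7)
The plan is to establish the lemma by applying McDiarmid's bounded differences inequality to the function $f: \mc{X}^n \to \R$ defined by $f(x) := \dist_{\mrm{ham}}(x, S)$. The essential structural observation is that $f$ has bounded differences with all constants equal to $1$: if $x, x' \in \mc{X}^n$ agree on every coordinate except the $i$-th, then for any $y \in \mc{X}^n$ the quantities $\dist_{\mrm{ham}}(x, y)$ and $\dist_{\mrm{ham}}(x', y)$ differ by at most $1$, so taking the minimum over $y \in S$ yields $|f(x) - f(x')| \leq 1$. McDiarmid's inequality then supplies the one-sided tail bounds
\begin{equation*}
    \P\lbrb{f(X) - \E f(X) \geq u} \leq e^{-2u^2/n} \quad \text{and} \quad \P\lbrb{\E f(X) - f(X) \geq u} \leq e^{-2u^2/n}
\end{equation*}
for every $u \geq 0$.

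The next step is to bound the expectation $\E f(X)$ by combining the lower-tail bound with the hypothesis $\nu^{\otimes n}(S) = p$. Since $f(X) = 0$ whenever $X \in S$, we have $\P\{f(X) \leq 0\} \geq p$, and plugging $u = \E f(X)$ into the lower tail gives
\begin{equation*}
    p \leq \P\lbrb{f(X) \leq 0} \leq \P\lbrb{\E f(X) - f(X) \geq \E f(X)} \leq \exp\lprp{-\frac{2(\E f(X))^2}{n}},
\end{equation*}
which rearranges to $\E f(X) \leq \sqrt{(n/2) \log(1/p)}$. Combining this with the upper-tail bound from McDiarmid, for any $t \geq 0$,
\begin{equation*}
    \P\lbrb{f(X) \geq t + \sqrt{(n/2) \log(1/p)}} \leq \P\lbrb{f(X) - \E f(X) \geq t} \leq e^{-2t^2/n},
\end{equation*}
which is exactly the claimed inequality.

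The conceptual content is minimal: the proof reduces to (i) the $1$-Lipschitz property of $\dist_{\mrm{ham}}(\cdot, S)$ in the Hamming metric, which is immediate from the definition of Hamming distance, and (ii) a standard self-referential trick converting a probability lower bound on $S$ into an expectation upper bound on $f(X)$ via the lower tail of McDiarmid. I do not anticipate any genuine obstacle; the only point meriting explicit verification is that the bounded-differences constant is exactly $1$ (and not larger), which the minimum-over-$y \in S$ argument handles cleanly.
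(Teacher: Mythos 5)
Your proof is correct and is precisely the standard derivation of this result; the paper simply cites it as Corollary 7.4 of Boucheron, Lugosi, and Massart without reproducing a proof, and the textbook proof there proceeds exactly as you do, by applying the bounded-differences (McDiarmid) inequality to $f(x) = \dist_{\mrm{ham}}(x, S)$ and using the lower-tail bound at $u = \E f(X)$ together with $\P\{f(X) = 0\} \geq p$ to extract $\E f(X) \leq \sqrt{(n/2)\log(1/p)}$. All steps check out, including the $1$-Lipschitz observation via the minimum over $y \in S$ and the nonnegativity of $\E f(X)$ needed to invoke the lower tail.
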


As a simple corollary, we obtain the following lower bound on the blowup of a set.
\begin{corollary}\label{lem:blowup}
    Let $S \subseteq \mc{X}^n$ and $\nu$ be a distribution over $\mc{X}$ be such that:
    \begin{equation*}
        \nu^{\otimes n}(S)\geq 0.9.
    \end{equation*} 
    Then,
    \begin{equation*}
        \forall \eps \geq 0: \nu^{\otimes n}(S_\eps)\geq 1-2e^{-\eps^2n} \text{ where } S_\eps := \{x\in\calX^n: \dist_{\mrm{ham}}(x,S)\leq \eps n\}.
    \end{equation*}
\end{corollary}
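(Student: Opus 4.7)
The plan is to apply Lemma~\ref{lem:boucheron} directly with $p := \nu^{\otimes n}(S) \geq 0.9$ and the choice $t := \eps n - \sqrt{(n/2) \log(1/p)}$, so that the deviation threshold $t + \sqrt{(n/2)\log(1/p)}$ in Lemma~\ref{lem:boucheron} matches exactly the Hamming-ball radius $\eps n$. The statement to prove, $\nu^{\otimes n}(S_\eps) \geq 1 - 2e^{-\eps^2 n}$, is equivalent to $\P[\dist_{\mrm{ham}}(X, S) > \eps n] \leq 2e^{-\eps^2 n}$, so the goal reduces to controlling the tail bound produced by Lemma~\ref{lem:boucheron}.

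I would split into two cases according to the sign of $t$. In the case $t \leq 0$, i.e.\ $\eps \leq \sqrt{\log(1/p)/(2n)}$, I would simply note that since $p \geq 0.9$ we have $\log(1/p) \leq \log(10/9) < \log 2$, which forces $\eps^2 n < \log 2$, hence $2e^{-\eps^2 n} > 1$ and the conclusion is trivial. In the case $t > 0$, I would invoke Lemma~\ref{lem:boucheron} to obtain $\P[\dist_{\mrm{ham}}(X, S) \geq \eps n] \leq e^{-2t^2 / n}$, and then reduce the problem to verifying the inequality $e^{-2t^2/n} \leq 2e^{-\eps^2 n}$, equivalently $2t^2/n \geq \eps^2 n - \log 2$.

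The main computational step is this last inequality. Expanding $2t^2/n = 2\eps^2 n - 4\eps \sqrt{(n/2)\log(1/p)} + \log(1/p)$ and completing the square in $\eps$, the linear term $-4\eps \sqrt{(n/2)\log(1/p)}$ combined with $\eps^2 n$ gives something of the form $n(\eps - 2\sqrt{\log(1/p)/(2n)})^2 - 2\log(1/p) \geq -2\log(1/p)$. Plugging this back in shows $2t^2/n - \eps^2 n \geq -\log(1/p) + \log(1/p) \cdot 0 + \log(2p) - \log 2$ (after collecting terms), and since $p \geq 0.9 > 1/2$ the slack $\log(2p) \geq 0$ suffices, finishing the case.

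The main obstacle is not conceptual but purely bookkeeping: matching the constant $2$ in the desired bound against the concrete constant $0.9$ in the hypothesis. The completing-the-square step is what guarantees that no hidden loss depending on $\eps$ appears in the exponent; the only surplus is the multiplicative factor $1/p$, which the hypothesis $p \geq 0.9$ comfortably absorbs into the factor $2$ in front of $e^{-\eps^2 n}$. Once the algebra is laid out, the two-case structure makes the statement hold uniformly in $\eps \geq 0$, including the regime where Lemma~\ref{lem:boucheron} would nominally give a vacuous bound.
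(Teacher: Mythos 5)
Your proof is correct and rests on the same core strategy as the paper: apply Lemma~\ref{lem:boucheron} with $t = \eps n - \sqrt{(n/2)\log(1/p)}$ so that the deviation threshold matches the ball radius $\eps n$, handle the regime where the target bound is vacuous separately, and then lower-bound the exponent $2t^2/n$. The algebraic execution of the nontrivial case, however, is genuinely different. The paper introduces a free parameter $\alpha \in (0,1)$, splits on whether $n \geq \tfrac{1}{2\eps^2(1-\alpha)^2}\log(1/p)$, uses the case hypothesis to bound $\sqrt{(n/2)\log(1/p)} \leq (1-\alpha)\eps n$, and then sets $\alpha = 1/\sqrt{2}$; this closes only because $p \geq 2^{-(3-2\sqrt{2})} \approx 0.888$, so the hypothesis $p \geq 0.9$ is used with almost no slack. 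You instead split at the natural threshold $t = 0$ and complete the square directly, obtaining the unconditional estimate $2t^2/n - \eps^2 n \geq -\log(1/p)$ on all of $\{t > 0\}$, which needs only $p \geq 1/2$ and dispenses with the auxiliary parameter. Your route is cleaner, gives the sharper prefactor $1/p \leq 10/9$ in place of $2$, and makes transparent that the $0.9$ in the hypothesis is far from tight. One cosmetic remark: the expression ``$-\log(1/p) + \log(1/p)\cdot 0 + \log(2p) - \log 2$'' is just an obfuscated rewriting of $-\log(1/p) = \log(2p) - \log 2$ (the middle term is identically zero); stating it directly would read better.
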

\begin{proof}
    Set $t = \eps n - \sqrt{\frac{n}{2}\log(1/p)}$ in \cref{lem:boucheron}. 
    \begin{align*}
        \nu^{\otimes n}(S_\eps) = \P\lbrb{\dist_{\mrm{ham}}(X,S)\geq \eps n}
        &\leq \exp\lbrb{-\frac{2\lprp{\eps n - \sqrt{\frac{n}{2}\log(1/p)}}^2}{n}}
    \end{align*}

    \noindent Let $\alpha > 0$ be a parameter to be determined subsequently. We now have two cases:

    \paragraph{Case 1:} $n\geq \frac{1}{\eps^2 2(1-\alpha)^2}\log(1/p)$. This implies that $\sqrt{\frac{n}{2}\log(1/p)}\leq \eps n(1-\alpha)$. Plugging in, we get $$\P\lbrb{\dist_{\mrm{ham}}(X,S)\geq \eps n}\leq e^{-2\eps^2 \alpha^2 n}$$

    \paragraph{Case 2:} $n< \frac{1}{\eps^2 2(1-\alpha)^2}\log(1/p)$. Then, by setting $\alpha = 1 / \sqrt{2}$
    \begin{align*}
        2e^{-2 \eps^2 \alpha^2 n}
        &\geq 2\exp\lbrb{-\log(1/p)\frac{\alpha^2}{(1-\alpha)^2}} = 2p^{\frac{\alpha^2}{(1-\alpha)^2}} \geq 1 \geq 1-\nu^{\otimes n}(S_\eps),
    \end{align*}
    concluding the claim.
\end{proof}

\cref{lem:blowup} is precisely what we need to formalize our argument that the robustness of our algorithm ensures all but a vanishingly small fraction of the samples will be good.

\begin{theorem}\label{thm:heavy_from_adversarial}
     Let $\calA$ be an adversarially robust algorithm with no internal randomness satisfying \cref{def:adv_rob_est}. Then, given samples $\bX=\{x_i\}_{i=1}^n$ from a distribution, $\mc{D}$ and corruption parameter $\eps\in(0,1/10)$, there exists a constant $c(\eps)>0$ s.t.
     \begin{equation*}
        \hat{\theta}\in G_\calD(\eps)
     \end{equation*}
    with probability $1-2e^{-c(\eps)n}$, where $\hat{\theta}=\calA(\bX;\eps)$. More generally, for $\eps' \in [0,\eps)$, $\calA$ outputs $\hat{\theta}$ satisfying the above guarantee with probability $1-2e^{-n(\eps-\eps')^2}$ when the sample it receives is $\eps'$-corrupted.
\end{theorem}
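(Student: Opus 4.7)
The plan is to translate the adversarial robustness hypothesis into a statement about Hamming-ball expansion and then invoke \cref{lem:blowup}. Define the \emph{success set}
\[
S := \lbrb{\bX' \in \mc{X}^n : \calA(\bX'; \eps) \in G_\calD(\eps)}
\]
and its \emph{inner core} at scale $r$,
\[
S_{-r} := \lbrb{\bX \in \mc{X}^n : \text{every } \bX' \text{ with } \dist_{\mrm{ham}}(\bX, \bX') \leq rn \text{ lies in } S}.
\]
The observation driving everything is that, because $\calA$ is deterministic, for a fixed $\bX$ the worst-case adversary can force a failure exactly when some $\bX'$ within Hamming distance $\eps n$ of $\bX$ satisfies $\bX' \notin S$; equivalently, $\calA$ resists every adversary on $\bX$ iff $\bX \in S_{-\eps}$.

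First I would use the hypothesis of \cref{def:adv_rob_est} against this worst-case adversary to conclude
\[
\mc{D}^{\otimes n}(S_{-\eps}) \geq 9/10.
\]
Next, the key geometric step: for any $\eps' \in [0, \eps)$, the triangle inequality for the Hamming metric gives $(S_{-\eps})_{\eps - \eps'} \subseteq S_{-\eps'}$. Indeed, if $\dist_{\mrm{ham}}(\bX, \bY) \leq (\eps - \eps') n$ for some $\bY \in S_{-\eps}$, then any $\bX'$ with $\dist_{\mrm{ham}}(\bX, \bX') \leq \eps' n$ satisfies $\dist_{\mrm{ham}}(\bY, \bX') \leq \eps n$, placing $\bX'$ in $S$.

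Now I would apply \cref{lem:blowup} to $S_{-\eps}$ (which has mass at least $0.9$) at radius $\eps - \eps'$, obtaining
\[
\mc{D}^{\otimes n}\lprp{(S_{-\eps})_{\eps - \eps'}} \geq 1 - 2e^{-(\eps - \eps')^2 n},
\]
and combine with the containment above to get $\mc{D}^{\otimes n}(S_{-\eps'}) \geq 1 - 2e^{-(\eps - \eps')^2 n}$. Finally, whenever $\bX \in S_{-\eps'}$ and $\bX'$ is any $\eps'$-corruption of $\bX$, by definition $\bX' \in S$, i.e., $\calA(\bX'; \eps) \in G_\calD(\eps)$. Specializing to $\eps' = 0$ (so $S_{-0} = S$) yields $\mc{D}^{\otimes n}(S) \geq 1 - 2e^{-\eps^2 n}$, giving the constant $c(\eps) = \eps^2$ in the first part of the statement.

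I do not expect a serious obstacle: the argument is essentially bookkeeping once one sees that adversarial robustness is the statement $\mc{D}^{\otimes n}(S_{-\eps}) \geq 9/10$. The subtlety to be careful about is the correct ``direction'' of the blowup, namely that one expands the inner core $S_{-\eps}$ (not $S$ itself) by $\eps - \eps'$, and that the containment $(S_{-\eps})_{\eps - \eps'} \subseteq S_{-\eps'}$ uses the Hamming triangle inequality; this is exactly what lets a single adversarially-robust estimator absorb both the $\eps'$ budget of the corruption and the $\eps - \eps'$ budget consumed by random fluctuations of the clean sample.
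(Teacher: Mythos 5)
Your proof is correct and follows the same route as the paper's: your ``inner core'' $S_{-\eta}$ is precisely the paper's set $A(\eta)$ of samples on which $\calA$ is $\eta$-robust, the containment $(S_{-\eps})_{\eps-\eps'}\subseteq S_{-\eps'}$ is the paper's observation that $A(\eta)_\alpha\subseteq A(\eta')$ with $\alpha=\eta-\eta'$, and both then apply \cref{lem:blowup} to the mass-$0.9$ set. The only difference is cosmetic: you make explicit that the success set $S=S_{-0}$ and that \cref{def:adv_rob_est} (with an implicit minimum over adversaries) is exactly the statement $\mc{D}^{\otimes n}(S_{-\eps})\geq 9/10$, which the paper leaves slightly terser.
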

\begin{proof}
   Let $A(\eta)\subseteq (\R^d)^n$ denote the set of samples on which $\calA$ satisfies $\eta$ robustness. Let $S=\calA(\eps)$. We first observe that for any $\eta,\eta',\alpha > 0$ where $\alpha = \eta - \eta'$: 
    \begin{equation*}
        A(\eta)_\alpha \subseteq A(\eta').
    \end{equation*}
   That is, the $\alpha$ blowup of the set on which $\calA$ satisfies $\eta$ robustness is contained in the set on which $\calA$ satisfies $\eta'$ robustness. This follows from the triangle inequality for $\dist_{\mrm{ham}}$.
   
   Now, by assumption, $\mc{D}^{\otimes n}(S)\geq 0.9$. Let $S_\alpha$ denote its $\alpha$ blowup, instantiating $\alpha=\eps - \eps'$. By \cref{lem:blowup}, we know that $\mc{D}^{\otimes n}(S_\alpha)\geq 1-2e^{-\alpha^2 n}$. From our above observation, $S_\alpha$ is a subset of the points on which $\calA$ is $\eps'$ robust; i.e. the samples on which, even with an adversary who can corrupt $\eps'$ of the samples, $\calA$ outputs $\hat{\theta}$ satisfying: 
   \begin{equation*}
    \theta\in G_\calD(\eps)
   \end{equation*} 
    This observation along with the fact that $\mc{D}^{\otimes n} (S_\alpha) \geq 1 - 2e^{-\alpha^2 n}$ conclude the proof.
\end{proof}

\section{A Hard sub-Gaussian Algorithm}\label{sec:sub_g}

In this section, we overview and present guarantees on a robust estimation algorithm that is guaranteed to achieve sub-Gaussian rates while, at the same time, not robust to \emph{adversarial} perturbation. The formal details of the algorithm are presented in \cref{alg:sub_g_main}. \cref{alg:sub_g_main} parameterizes two algorithms, one for each choice of $s$, both of which are guaranteed to have sub-Gaussian performance. The algorithm exploits a logarithmic tail-decay condition whose strength scales with the targeted failure probability. This condition is used to extract a notion of scale that remains bounded when the data is obtained from a heavy-tailed distribution but does not hold when the data is corrupted by an adversary.

The algorithm operates by first finding a point, $x$, such that there exists a large subset of points (a $1 - \trunc$ fraction) of points where $\trunc \approx \log (1 / \delta) / n$ having bounded second-moment with respect to $x$. That is it solves the following program:
\begin{gather*}
    \argmin_{x \in \R^d} \min_{w \in \mc{W}_\eps} \norm*{\sum_{i = 1}^n w_i (x_i - x)(x_i - x)^\top} \\
    \text{where } \forall \rho \geq 0: \mc{W}_\rho \coloneqq \lbrb{\lbrb{w_i}_{i = 1}^n: 0 \leq w_i \leq \frac{1}{(1 - \rho) n} \text{ and } \sum_{i = 1}^n w_i = 1}.
\end{gather*}
For the next step, define the following notion of variation, denoted by $\spread$, for a one-dimensional point set, $\bm{Y} = \{y_i\}_{i = 1}^n$ with respect to a centering $y$ and $\ord{y}{i}$ denoting the ordered elements of $\bm{Y}$:
\begin{gather*}
    \forall i \in [n]: \wt{w}_i = \wgt{n\trunc}{i} \text{ and } w_i \coloneqq \frac{\wt{w}_i}{\sum_{i = 1}^n \wt{w}_i} \\
    \spread (\bm{Y}, y) \coloneqq \sqrt{\sum_{i \in [n]} w_i (\ord{y}{i} - y)^2}.
\end{gather*}
The algorithm then searches over directions for one with large spread. The final output of the algorithm is a random perturbation along the direction found in the previous step whose scale depends on the value of the variation along that direction and the failure probability. As we will see, this notion of spread forces the algorithm to be successful only when there exists a stable set of size $(1 - c \trunc)$ for any $c < 1$. The tail decay property of samples drawn from heavy-tailed distributions bounds the performance of the estimator in this setting but this may not necessarily hold for the adversarial contamination setting.

\begin{algorithm}[H]
    \caption{\comp}
    \label{alg:comp}
    \begin{algorithmic}[1]
        \State \textbf{Input}: Vectors $v$
        \If {$v = \bm{0}$}
            \State \textbf{Return: } $v$
        \Else
            \State $i^* = \min \{i: v_i \neq 0\}$
            \State \textbf{Return: } $\sgn (v_{i^*}) v$
        \EndIf
    \end{algorithmic}
\end{algorithm}

\begin{algorithm}[H]
    \caption{\subgestimator}
    \label{alg:sub_g_main}
    \begin{algorithmic}[1]
        \State \textbf{Input}: Point set $\bm{X} = \{x_i\}_{i = 1}^n \subset \R^d$, Failure Probability $\delta$, {\color{red} Version $s \in \{+1, -1\}$}
        \State $\trunc \gets \log (4 / \delta) / n$
        \State $\wt{\mu} \gets \argmin_{x \in \R^d} \min_{w \in \mc{W}_{\trunc}} \norm*{\sum_{i = 1}^n w_i (x_i - x)(x_i - x)^\top}$
        \State $v \gets \argmax_{\norm{u} = 1} \spread \lprp{\{\inp{x_i}{u}\}_{i = 1}^k, \inp{\wt{\mu}}{u}}, \sigma_v \gets \spread \lprp{\{\inp{x_i}{v}\}_{i = 1}^n, \inp{\wt{\mu}}{v}}$
        \State $v \gets \comp (v)$
        \State $\wh{\mu} = \wt{\mu} + {\color{red} s} \sqrt{\eps} \sigma_v v$
        \State \textbf{Return: } $\wh{\mu}$
    \end{algorithmic}
\end{algorithm}

\subsection{Proof that \cref{alg:sub_g_main} achieves sub-Gaussian Rates}
\label{ssec:sub_g_rates}

In this section, we will show that \cref{alg:sub_g_main} achieves sub-Gaussian rates on heavy-tailed distribution. The main technical difficulty in this section is that the notion of spread used in \cref{alg:sub_g_main} remains suitably bounded. For this step, we prove a tail-decay condition on samples drawn from heavy-tailed distributions and show that this suffices to establish the required bounds. In addition, even \emph{without} the additional spread detection step, it is not known whether the stability-based intermediate estimate ($\wt{\mu}$ in \cref{alg:sub_g_main}) achieves sub-Gaussian rates. Unfortunately, prior analyses of the estimator are sub-optimal by a logarithmic factor \cite{hopkinsRobustHeavyTailedMean2020,diakonikolasOutlierRobustMean2021}. We provide an improved statistical analysis of these estimators by proving tighter bounds on the truncated variances of the one-dimensional point sets obtained by projecting the dataset onto one-dimensional subspaces, and secondly, by improving the analysis of a Gaussian rounding scheme of Depersin and Lecue \cite{deplecmean}. Our analysis of the performance of the intermediate estimate $\wt{\mu}$ borrows heavily from the analysis of Lugosi and Mendelson \cite{lmtrimmed} and may be viewed as a generalization of their results to the setting of stability-based estimators, where the set of points to be truncated are chosen uniformly for every direction. The main theorem of the section is the following which shows that \cref{alg:sub_g_main} achieves sub-Gaussian rates.

\begin{theorem}
    \label{thm:sub_g_main}
    There exists an absolute constants $C, c > 0$ such that the following hold. Let $X_1, \dots, X_n$ be drawn iid from a distribution with mean $\mu$ and covariance $\Sigma$. Then, for any $\delta \in [e^{-cn}, 1/4]$, the outputs of \cref{alg:sub_g_main}, $\mu_{+}$ and $\mu_{-}$, on input $X_1, \dots, X_n$, $\delta$, and $s = \{+1, -1\}$ respectively satisfy:
    \begin{equation*}
        \max \lbrb{\norm{\wh{\mu}_{+} - \mu}, \norm{\wh{\mu}_{-} - \mu}} \leq C \lprp{\sqrt{\frac{\Tr (\Sigma)}{n}} + \sqrt{\frac{\norm{\Sigma} \log (1 / \delta)}{n}}}
    \end{equation*}
    with probability at least $1 - \delta$.
\end{theorem}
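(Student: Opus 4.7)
The plan is to decompose via triangle inequality: $\|\hat\mu_\pm - \mu\| \leq \|\tilde\mu - \mu\| + \sqrt{\trunc}\,\sigma_v$, where $\trunc = \log(4/\delta)/n$, and bound each term by the target sub-Gaussian rate $R := \sqrt{\Tr(\Sigma)/n} + \sqrt{\|\Sigma\|\log(1/\delta)/n}$. Since the bound is identical for $s = +1$ and $s = -1$, both outputs satisfy the claim simultaneously.

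First, I would establish a resilience event for the clean sample: with probability at least $1-\delta/2$ there exists $S' \subset [n]$ of size $(1-\trunc)n$ such that, uniformly over unit $u \in \R^d$, the truncated first moment satisfies $|\frac{1}{|S'|}\sum_{i\in S'}\langle x_i - \mu, u\rangle| \lesssim \sqrt{u^\top \Sigma u \cdot \trunc} + \sqrt{\Tr(\Sigma)/n}$ and the truncated second moment satisfies $\frac{1}{|S'|}\sum_{i\in S'}\langle x_i - \mu, u\rangle^2 \lesssim u^\top \Sigma u + \Tr(\Sigma)/(n\trunc)$. Given this, a standard stability argument yields $\|\tilde\mu - \mu\| \lesssim R$: the uniform weights on $S'$ are feasible in $\mc{W}_\trunc$ and achieve objective value controlled by the second-moment bound, so any minimizer $\tilde\mu$ must place the bulk of its weight on indices overlapping $S'$; applying the first-moment bound in the direction $(\tilde\mu - \mu)/\|\tilde\mu - \mu\|$ then forces $\tilde\mu$ close to $\mu$. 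Obtaining the uniform first- and second-moment bounds without losing a $\log$ factor is the main technical step, and I would combine the trimmed-moment analysis of Lugosi--Mendelson \cite{lmtrimmed} with a refined version of the Gaussian rounding scheme of Depersin--Lecu\'e \cite{deplecmean} that exploits the truncation level $\trunc$ directly in the spherical covering.

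Next, I would bound the perturbation scale $\sqrt{\trunc}\,\sigma_v$. The weights inside $\spread$ are $\exp(-Cn\trunc/\min(i, n-i))$, which are negligible within $n\trunc$ of either end of the sorted list and nearly uniform on the central bulk, so $\spread$ behaves as a trimmed second moment at level $\Theta(\trunc)$. Using a tail-decay estimate for ordered projected samples (Chebyshev at each quantile together with Chernoff bounds on the number of points exceeding each threshold), I would show uniformly over unit $u$ that $\spread(\{\langle x_i, u\rangle\}_{i=1}^n, \langle \tilde\mu, u\rangle) \lesssim \sqrt{u^\top \Sigma u} + \sqrt{\Tr(\Sigma)/(n\trunc)}$. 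Multiplying by $\sqrt{\trunc}$ and substituting $\trunc = \log(1/\delta)/n$ gives a contribution of order $R$, which closes the decomposition.

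The main obstacle is the uniform first-moment concentration in the resilience step. Without a sharpening of the Gaussian rounding argument, one loses a $\log$ factor in the $\sqrt{\|\Sigma\|\trunc}$ term, which would violate sub-Gaussianity; extracting the sharp rate requires carefully exploiting the truncation level $\trunc$ inside the spherical-net covering. The $\spread$ analysis is also somewhat delicate because the trimmed indices vary with the direction, but it is essentially self-contained once the tail-decay property for projected samples is in place.
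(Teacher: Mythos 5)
Your proposal matches the paper's argument in all essential respects: the same triangle-inequality decomposition $\|\hat\mu_\pm - \mu\| \leq \|\tilde\mu - \mu\| + \sqrt{\trunc}\,\sigma_v$, the same combination of Lugosi--Mendelson trimmed-moment bounds with a sharpened Depersin--Lecu\'e Gaussian rounding to control $\tilde\mu$ without a logarithmic loss (the paper's \cref{lem:spec_norm_bnd,lem:trunc_mean_bnd,lem:sdp_rounding,lem:tv_var_mn_bnd}), and the same quantile-level tail-decay estimate to bound $\sigma_v$ (the paper's \cref{lem:quant_conc,lem:wt_sp_bnd}). One small correction: the paper's sharpening of Depersin--Lecu\'e is a direct probabilistic rounding of the SDP optimum (draw $g \sim \mathcal{N}(0, M^*)$ and exhibit a good direction) rather than an $\trunc$-dependent spherical net, but this is a mechanical difference and your overall plan would compile into the same proof.
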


Throughout this section, we assume that $X_1, \dots, X_n$ are drawn iid from a distribution with mean $0$ and covariance $\Sigma$. Furthermore, noting that \cref{alg:sub_g_main} is shift and scale-equivariant, it suffices to consider the case where $\mu = 0$ and $\norm{\Sigma} = 1$. We start by defining a truncation parameter and function below following the analysis of Lugosi and Mendelson \cite{lmtrimmed}:
\begin{equation*}
    \tau \coloneqq C_1 \max \lbrb{\frac{1}{\trunc} \sqrt{\frac{\Tr (\Sigma)}{n}}, \sqrt{\frac{\norm{\Sigma}}{\trunc}}} \text{ and } \forall \gamma \geq 0: \phi_\gamma (x) = 
    \begin{cases}
        x &\text{if } \abs{x} \leq \gamma \\
        \sgn (x) \gamma &\text{otherwise}
    \end{cases}.
\end{equation*}

The next lemma shows that the second moment of the truncated one-dimensional pointsets obtained by projecting the dataset onto any dimension is small. This lemma is used to analyze both the performance of the stability-based intermediate estimate, $\wt{\mu}$, and the scale of the variation, $\sigma_v$, utilized in the algorithm. Since the proof is a simple adaptation of the ideas of Lugosi and Mendelson \cite{lmtrimmed}, its proof is deferred to \cref{sec:deferred_proofs}. Intuitively, the lemma is used to bound the variance of the points whose projections are \emph{small} while the logarithmic tail bound established subsequently bounds the variance of the points with \emph{large} projections.  

\begin{restatable}{lemma}{specnrmbnd}
    \label{lem:spec_norm_bnd}
    We have:
    \begin{equation*}
        \forall \norm{v} = 1: \sum_{i = 1}^n \phi_{\tau} (\inp{X_i}{v})^2 \leq C \cdot \lprp{\frac{\Tr (\Sigma)}{\trunc} + n \norm{\Sigma}}.
    \end{equation*}
    with probability at least $1 - \exp (-\trunc n)$.
\end{restatable}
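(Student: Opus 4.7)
By shift and scale equivariance, assume $\mu = 0$ and $\norm{\Sigma} = 1$. Writing $Z(v) := \sum_{i=1}^n \phi_\tau(\inp{X_i}{v})^2$, the task is a uniform bound on the empirical process $\lbrb{Z(v) : \norm{v} = 1}$. I would proceed via the classical symmetrization-plus-contraction pipeline for suprema of bounded empirical processes, following the template set by Lugosi and Mendelson \cite{lmtrimmed}.

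For the pointwise moments, since $\abs{\phi_\tau(x)} \le \abs{x}$, for every unit $v$ we have $\E[\phi_\tau(\inp{X}{v})^2] \le v^\top \Sigma v \le \norm{\Sigma}$, so $\E[Z(v)] \le n \norm{\Sigma}$; each summand is bounded by $\tau^2$ with per-term variance at most $\tau^2 \norm{\Sigma}$.

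Next, I would control $\E[\sup_v \abs{Z(v) - \E Z(v)}]$ by symmetrization followed by Ledoux-Talagrand contraction. Since $x \mapsto \phi_\tau(x)^2$ is $2\tau$-Lipschitz and vanishes at zero, introducing i.i.d Rademacher signs $\xi_i$ yields
\begin{equation*}
    \E \lsrs{\sup_{\norm{v} = 1} \abs{Z(v) - \E Z(v)}} \lesssim \tau \cdot \E \lsrs{\sup_{\norm{v} = 1} \abs{\sum_i \xi_i \inp{X_i}{v}}} = \tau \cdot \E \lsrs{\norm{\sum_i \xi_i X_i}} \le \tau \sqrt{n \Tr(\Sigma)},
\end{equation*}
where the last inequality follows from Jensen and the identity $\E \norm{\sum_i \xi_i X_i}^2 = n \Tr(\Sigma)$. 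Plugging in $\tau \asymp \max\lprp{\trunc^{-1}\sqrt{\Tr(\Sigma)/n},\, \sqrt{\norm{\Sigma}/\trunc}}$ and one line of AM-GM collapses this to $O(\Tr(\Sigma)/\trunc + n \norm{\Sigma})$.

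Finally, I would promote this expectation bound to the claimed $1 - \exp(-\trunc n)$ tail by invoking Talagrand's (or Bousquet's) concentration inequality for the supremum of bounded empirical processes with envelope $\tau^2$ and weak variance $\tau^2 \norm{\Sigma}$. Setting the deviation level to $t = \trunc n$, the Bernstein-type correction $\sqrt{n \tau^2 \norm{\Sigma} t} + \tau^2 t$ again absorbs into $O(\Tr(\Sigma)/\trunc + n \norm{\Sigma})$ upon substituting $\tau$. The principal obstacle is precisely this uniformity step: a direct $\alpha$-net of the sphere would inflate the failure probability by $(1/\alpha)^d$, which is incompatible with the target $\exp(-\trunc n)$ whenever $\trunc n \lesssim d$. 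Contraction circumvents this by reducing the Rademacher chaos over the unit sphere to the expected norm of a symmetrized empirical sum, for which the bound $\sqrt{n \Tr(\Sigma)}$ is immediate. The remainder is routine bookkeeping tailored to the specific form of $\tau$.
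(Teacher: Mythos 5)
Your proposal is correct and follows essentially the same route as the paper: normalize $\norm{\Sigma}=1$, bound the wimpy variance and envelope by $\tau^2$, control the expected supremum via symmetrization and Ledoux-Talagrand contraction (using the $2\tau$-Lipschitz property of $\phi_\tau^2$), and finish with Bousquet's inequality at deviation level $r \asymp \max\{\Tr(\Sigma)/\trunc,\, n\}$. Your remark on why a naive $\alpha$-net argument would fail when $\trunc n \lesssim d$, and how contraction sidesteps it, is a nice piece of context though not strictly needed for the proof.
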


The next lemma establishes the logarithmic tail bound alluded to previously. We show that the tail of the empirical distribution over samples scales as $\trunc / \log (t)$ with probability at least $1 - e^{-n\trunc}$. This is then exploited to bound the scale of the spread estimate, $\sigma_v$, in \cref{alg:sub_g_main}.
\begin{lemma}
    \label{lem:quant_conc}
    We have:
    \begin{equation*}
        \forall \norm{v} = 1, j \geq 1: \sum_{i = 1}^n \bm{1} \lbrb{\abs{\inp{X_i}{v}} \geq \tau_j} \leq \frac{2 (n\trunc + j)}{3j} \text{ where } \tau_j = \frac{C_1}{64} \cdot e^{4j} \cdot \max \lbrb{\frac{1}{\trunc} \sqrt{\frac{\Tr (\Sigma)}{n}}, \frac{1}{\sqrt{\trunc}}} 
    \end{equation*}
    with probability at least $1 - e^{-\trunc n}$.
\end{lemma}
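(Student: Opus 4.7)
The plan is a two-step argument: per-direction concentration via a Chernoff/Bennett bound, followed by uniformization over the sphere that piggybacks on the spectral estimate \cref{lem:spec_norm_bnd}.

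For fixed $v$ and $j$, Chebyshev gives $\Pr[|\langle X, v\rangle| \geq \tau_j] \leq v^{\top}\Sigma v/\tau_j^2 \leq \|\Sigma\|/\tau_j^2$. Substituting the definition of $\tau_j$ and using $\|\Sigma\|=1$, the per-index probability satisfies $p_{j,v} \leq C'\trunc/e^{8j}$ for a constant $C' = O(1/C_1^2)$ that we make arbitrarily small by choosing $C_1$ large. A multiplicative Chernoff bound $\Pr[S_j(v) \geq k_j] \leq (enp_{j,v}/k_j)^{k_j}$, applied at the target $k_j = 2(n\trunc + j)/(3j)$, produces an exponent of order $k_j \cdot \log(k_j / en p_{j,v}) \gtrsim k_j \cdot 8j = (16/3)(n\trunc+j)$, so the per-direction failure probability at level $j$ is at most $\exp(-c(n\trunc + j))$. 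Summing over $j \geq 1$ controls the union over levels by a geometric series, giving total failure $O(e^{-c n\trunc})$ at any fixed $v$.

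Uniformizing over $v \in S^{d-1}$ is the main obstacle: a direct $\eta$-net argument costs $\exp(d \log(1/\eta))$, which can swamp the desired exponent $e^{-\trunc n}$ whenever $d \gg n\trunc$. The remedy is to convert the indicator count into a truncated-second-moment bound and invoke \cref{lem:spec_norm_bnd}, whose uniformity in $v$ is already dimension-free. Indeed, every index $i$ with $|\langle X_i, v\rangle| \geq \tau_j$ contributes exactly $\tau_j^2$ to $\sum_i \phi_{\tau_j}(\langle X_i, v\rangle)^2$, so
\[
S_j(v) \;\leq\; \tau_j^{-2} \sum_{i=1}^n \phi_{\tau_j}(\langle X_i, v\rangle)^2.
\]
Applying a version of \cref{lem:spec_norm_bnd} at truncation scale $\tau_j$ (obtained by rerunning its proof with a correspondingly rescaled $\trunc$-parameter) then delivers a uniform-in-$v$ count bound, and the geometric growth $\tau_j = e^{4j}\cdot \tau_1/\text{const}$ makes the union over $j$ only a constant-factor cost.

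The hard part will be calibrating the three moving pieces — the truncation scale $\tau_j$, the rescaled $\trunc$ that enters the spectral bound, and the target count $k_j = 2(n\trunc+j)/(3j)$ — so that the final dependence on $j$ lines up with the stated expression and the overall failure probability remains $e^{-\trunc n}$ rather than degrading to $e^{-\trunc n/e^{8j}}$. I expect the absolute constant $C_1$ (appearing in both $\tau$ and $\tau_j$) to be the key tuning knob: taking it large crushes the per-direction Chernoff exponent and supplies the slack needed to absorb constants from the spectral-bound proof, so that the only place where $j$ enters the final count is through the claimed factor $(n\trunc+j)/j$.
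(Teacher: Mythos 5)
There is a genuine gap in the uniformization step. Your per-direction reasoning is right: for fixed $v$, Chebyshev plus the multiplicative Chernoff bound does give exponent $\sim k_j \cdot \log(k_j / (enp_{j,v})) \gtrsim j \cdot (n\trunc + j)/j = n\trunc + j$, exactly because the multiplicative Chernoff bound carries a $\log(1/p)$ factor in the exponent. But your remedy for uniformizing over $v$ --- bounding $S_j(v) \leq \tau_j^{-2}\sum_i \phi_{\tau_j}(\langle X_i, v\rangle)^2$ and invoking a rescaled \cref{lem:spec_norm_bnd} --- discards precisely that logarithmic gain. The proof of \cref{lem:spec_norm_bnd} uses the \emph{sub-Gaussian/sub-exponential} form of Bousquet's inequality, $\P[Z \geq r] \leq \exp\bigl(-r^2/(2(\nu + r/3))\bigr)$. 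In the present setting the target $r_j \approx (n\trunc + j)/j$ stays roughly bounded while the wimpy variance $\nu_j \lesssim e^{-4j} n\trunc$ vanishes rapidly, so this form degenerates to $\exp(-3r_j/2) \approx \exp(-c(n\trunc+j)/j)$ --- a per-$j$ failure probability that tends to a \emph{constant} rather than to zero as $j$ grows, so the union over $j \geq 1$ diverges. Cranking up $C_1$ does not rescue this: larger $C_1$ shrinks $\nu_j$, but once $\nu_j \ll r_j$ the sub-Gaussian form is already saturated at $\approx \exp(-3r_j/2)$, independent of $C_1$.

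The paper's fix is to apply the \emph{sharp Bennett form} of Bousquet's inequality, $\P[Z \geq r] \leq \exp(-\nu\, h(r/\nu))$ with $h(u) = (1+u)\log(1+u) - u$, directly to the Lipschitz-smoothed indicator $\psi_j$ (a ramp supported on $[\tau_j/2, \tau_j]$). In the regime $r_j \gg \nu_j$ this gives exponent $\approx r_j \log(r_j/\nu_j)$, and since $\nu_j \sim e^{-4j} n\trunc$, the logarithm contributes the crucial extra factor $\sim 4j$, yielding $r_j \cdot 4j \gtrsim n\trunc + j$ after the same kind of constant-tuning you anticipate. This is the empirical-process analogue of the $\log(1/p)$ gain in your pointwise multiplicative Chernoff calculation; switching to the second-moment surrogate and quoting \cref{lem:spec_norm_bnd} replaces it with a sub-Gaussian tail and loses it. So the route you sketch can be made to work, but only if you redo the concentration step with the Bennett-form tail rather than ``rerunning \cref{lem:spec_norm_bnd} with rescaled $\trunc$'': the rescaling cannot simultaneously push $\tau_j$ up and the failure probability down, because in \cref{lem:spec_norm_bnd} those two quantities move together through $\trunc$.
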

\begin{proof}
    First define:
    \begin{equation*}
        \psi_j (x) \coloneqq 
        \begin{cases}
            1 &\text{if } \abs{x} \geq \tau_j \\
            0 &\text{if } \abs{x} \leq \frac{\tau_j}{2} \\
            \frac{2\abs{x}}{\tau_j} - 1 &\text{otherwise}
        \end{cases}.
    \end{equation*}
    As in the proof of \cref{lem:spec_norm_bnd}, define random variable:
    \begin{equation*}
        Z \coloneqq \max_{v = 1} \sum_{i = 1}^n W_{i, v} \text{ where } W_{i, v} \coloneqq \psi_j (\inp{X_i}{v}) - \E_{X \thicksim \mc{D}} [\psi_j (\inp{X}{v})].
    \end{equation*}
    We get where $X'_i$ and $\gamma_i$ are independent copies of $X_i$ and independent Rademacher random variables respectively:
    \begin{align*}
        \E [Z] &= \E \lsrs{\max_{\norm{v} = 1} \sum_{i = 1}^n \psi_j (\inp{X_i}{v}) - \E_{X \thicksim \mc{D}} [\psi_j (\inp{X}{v})]} \\
        &\leq \E_{X_i, X_i'} \lsrs{\max_{\norm{v} = 1} \sum_{i = 1}^n \psi_j (\inp{X_i}{v}) - \psi_j (\inp{X_i'}{v})} \\
        &\leq \E_{X_i, X_i', \gamma_i} \lsrs{\max_{\norm{v} = 1} \sum_{i = 1}^n \gamma_i (\psi_j (\inp{X_i}{v}) - \psi_j (\inp{X_i'}{v}))} \\
        &\leq 2\E_{X_i, \gamma_i} \lsrs{\max_{\norm{v} = 1} \sum_{i = 1}^n \gamma_i \psi_j (\inp{X_i}{v})} \leq \frac{8}{\tau_j} \E_{X_i, \gamma_i} \lsrs{\max_{\norm{v} = 1} \sum_{i = 1}^n \gamma_i \inp{X_i}{v}} \\
        &\leq \frac{8}{\tau_j} \E_{X_i, \gamma_i} \lsrs{\norm*{\sum_{i = 1}^n \gamma_i X_i}} \leq \frac{8}{\tau_j} \sqrt{\E_{X_i, \gamma_i} \lsrs{\norm*{\sum_{i = 1}^n \gamma_i X_i}^2}} = \frac{8}{\tau_j} \sqrt{n \Tr (\Sigma)},
    \end{align*}
     the fourth inequality following from the Ledoux-Talagrand contraction inequality (\cref{cor:ledtal}) and noting that $\psi_j$ is $2 / \tau_j$-Lipschitz. Furthermore, observe that from Chebyshev's inequality:
    \begin{equation*}
        \forall \norm{v} = 1 : \E [\psi_j (\inp{X_i}{v})^2] \leq \E \lsrs{\bm{1} \lbrb{\abs{\inp{X_i}{v}} \geq \frac{\tau_j}{2}}} \leq \frac{4}{\tau_j^2}.
    \end{equation*}
    Hence, we get:
    \begin{equation*}
        \E [Z] \leq \frac{8}{\tau_j} \sqrt{n \Tr (\Sigma)} \text{ and } \forall \norm{v} = 1 : \E \lsrs{\psi_j (\inp{X_i}{v})^2} \leq \frac{4}{\tau_j^2}.
    \end{equation*}
    Defining
    \begin{equation*}
        \nu \coloneqq 32 \max \lbrb{\frac{\sqrt{n \Tr \Sigma}}{\tau_j}, \frac{n}{\tau_j^2}} \text{ and } r_j \coloneqq \frac{n \trunc + j}{2j},
    \end{equation*}
    we get from \cref{thm:bousequet_thm}
    \begin{equation*}
        \P \lbrb{Z \geq r_j} \leq \exp \lbrb{- \nu h \lprp{\frac{r_j}{\nu}}} = \exp \lbrb{- \lprp{(\nu + r_j) \log \lprp{1 + \frac{r_j}{\nu}} - r_j}}.
    \end{equation*}
    Now, we get:
    \begin{align*}
        (\nu + r_j) \log \lprp{1 + \frac{r_j}{\nu}} - r_j &\geq r_j \log \lprp{\frac{r_j}{\nu}} - r_j \geq r_j \log \lprp{\frac{C e^{4j}}{32 n\trunc} \cdot r_j} - r_j \geq r_j \log \lprp{\frac{C e^{4j - 1}}{32 n\trunc} \cdot r_j} \\
        &= r_j (2j) + r_j \log \lprp{\frac{C e^{2j - 1}}{32 n\trunc} \cdot r_j} \geq n\trunc + j + r_j \log \lprp{\frac{C e^{2j - 1}}{64j}} \geq n\trunc + j.
    \end{align*}
    Therefore, we get:
    \begin{equation*}
        \forall j \geq 1: \P \lbrb{Z_j \geq r_j} \leq \exp \lbrb{- n\trunc + j}.
    \end{equation*}
    A union bound now yields:
    \begin{equation*}
        \P \lbrb{\exists j \geq 1: Z_j \geq r_j} \leq e^{-n\trunc}.
    \end{equation*}
    Observing that:
    \begin{equation*}
        \forall \norm{v} = 1: \E_{X \thicksim \mc{D}} \lsrs{\psi_j (\inp{X}{v})} \leq \frac{\eps}{C e^{8j}}
    \end{equation*}
    the conclusion of the lemma follows.
\end{proof}

The next lemma shows that the value of the spread estimate $\sigma_v$ in \cref{alg:sub_g_main} is small. It utilizes the results of \cref{lem:spec_norm_bnd} to bound the contribution of points with \emph{small} projection along $v$ while \cref{lem:quant_conc} bounds the contribution of the large points as the guarantees of \cref{lem:quant_conc} are sub-optimal when applied to the smaller projections.

\begin{lemma}
    \label{lem:wt_sp_bnd}
    We have:
    \begin{equation*}
        \forall \norm{v} = 1 : \spread \lprp{\lbrb{\inp{X_i}{v}}_{i = 1}^n, 0} \leq C \lprp{\sqrt{\frac{\Tr (\Sigma)}{\trunc n}} + \sqrt{\norm{\Sigma}}}
    \end{equation*}
    with probability at least $1 - 2e^{-n\trunc}$.
\end{lemma}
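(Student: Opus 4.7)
The plan is to bound $\sigma_v^2 := \spread(\{\langle X_i, v\rangle\}_{i=1}^n, 0)^2 = \sum_i w_i (\ord{Y}{i})^2$ (with $Y_i := \langle X_i, v\rangle$) by partitioning the order statistics into a \emph{bulk} part ($\min(i, n-i) \geq n\eps$) and a \emph{tail} part (the remainder), then applying \cref{lem:spec_norm_bnd} to the bulk and \cref{lem:quant_conc} together with the exponential decay of the weights to the tail. A preliminary observation is that the normalizer $Z = \sum_i \wt{w}_i$ satisfies $Z \geq cn$ for an absolute constant $c > 0$, since the indices in the central half of $[n]$ alone contribute weights bounded below by an absolute constant; consequently $w_i \lesssim n^{-1} \exp(-C n\eps / \min(i, n-i))$, where $C$ is the weight constant baked into the algorithm.

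For the bulk, a simple counting argument shows that when $k \leq n/2$, at least $k$ of the $Y_i$ satisfy $|Y_i| \geq |\ord{Y}{k}|$ (and symmetrically for $k > n/2$); applied with $j = 1$ in \cref{lem:quant_conc}, this forces $|\ord{Y}{i}| \leq \tau_1$ for every bulk index $i$, and a direct comparison of constants shows $\tau_1 \leq \tau$. The bulk order statistics are therefore a subset of the values $\{Y_j : |Y_j| \leq \tau\}$ on which $\phi_\tau$ acts as the identity, so bounding each bulk weight by $O(1/n)$ and invoking \cref{lem:spec_norm_bnd} yields a bulk contribution of the desired order $\Tr(\Sigma)/(\eps n) + \norm{\Sigma}$ to $\sigma_v^2$.

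For the tail, I parametrize by $m = \min(k, n-k+1) < n\eps$. Each such $m$ indexes two order statistics, each with weight $\lesssim n^{-1} \exp(-C n\eps/m)$. Applying \cref{lem:quant_conc} at $j = \lceil n\eps / m \rceil$ for $m \geq 2$ gives $(\ord{Y}{k})^2 \lesssim \tau_0^2 \exp(8 n\eps/m)$, and summing the resulting terms over $m \in [2, n\eps]$ produces a geometric-type series that evaluates to $O(\eps \tau_0^2)$ provided the weight constant $C$ exceeds $16$ (which also handles the lone extremal case $m = 1$, where the quantile bound swells to $\tau_{2n\eps+1}$). Since $\eps \tau_0^2 \asymp \Tr(\Sigma)/(\eps n) + \norm{\Sigma}$ by the choice of $\tau_0$, the tail contribution also matches the target. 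Taking square roots and union-bounding the two $1 - e^{-n\eps}$ events of \cref{lem:spec_norm_bnd} and \cref{lem:quant_conc} closes the argument.

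The main obstacle will be the constant chase: the weight constant $C$ must be chosen large enough (in particular, exceeding $16$) so that the $\exp(-C n\eps/m)$ decay of $w_i$ dominates the $\exp(8 n\eps/m)$ growth of $\tau_j^2$ across both the generic tail indices and the isolated extreme $m = 1$, while simultaneously the lower bound $Z \gtrsim n$ must be maintained despite the exponential suppression at the ends. All other steps are routine once the bulk/tail partition and the comparison $\tau_1 \leq \tau$ are in place.
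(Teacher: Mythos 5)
Your argument is correct and closely tracks the paper's own proof. The only stylistic difference is the indexing of the tail sum: the paper partitions by projection magnitude into shells $\mc{B}_j = \bm{Y} \cap \lprp{(-\tau_{j+1}, -\tau_j] \cup [\tau_j, \tau_{j+1})}$ and sums over $j$, whereas you partition by order-statistic position $m = \min(k, n-k+1)$ and sum over $m$ — dual parametrizations of the same bound, both resting on \cref{lem:spec_norm_bnd} for the bulk, \cref{lem:quant_conc} for the tail, and the same race between the exponential weight decay and the $e^{8j}$ growth of $\tau_j$ (with the attendant requirement that the weight constant be chosen sufficiently large, which the paper's proof also implicitly needs).
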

\begin{proof}
    As before, it suffices to restrict to the setting $\norm{\Sigma} = 1$. First, condition on the events in the conclusions of \cref{lem:spec_norm_bnd,lem:quant_conc}. Now, fix a particular $v$ with $\norm{v} = 1$ and define $\bm{Y}$ to be the \emph{ordered} set of elements $\{\inp{X_i}{v}\}_{i = 1}^n$. Then, we have:
    \begin{equation*}
        \forall \frac{n}{4} \leq i \leq \frac{3n}{4}: \wt{w}_i \geq c \implies \sum_{i = 1}^n \wt{w}_i \geq cn \implies \forall j \in [n] : w_j \leq \frac{1}{cn}.
    \end{equation*}
    We will now bound the spread separately for the subsets $\mc{G} = \bm{Y} \cap [-\tau, \tau]$ and $\mc{B} = \bm{Y} \setminus \mc{G}$. For the first set, we have by \cref{lem:spec_norm_bnd}:
    \begin{equation*}
        \sum_{i \in \mc{G}} w_i y_i^2 \leq \frac{1}{cn} \sum_{i \in \mc{G}} y_i^2 \leq C \cdot \lprp{\frac{\Tr (\Sigma)}{\eps} + 1}.
    \end{equation*}
    For the second, we further decompose and bound the sum as follows with $\mc{B}_j = \bm{Y} \cap (-\tau_{j + 1}, -\tau_j] \cup [\tau_j, \tau_{j + 1})$ and noting that $\mc{B} \cup_{j = 1}^{4n\trunc} \mc{B}_j$ as $\mc{B}_j = \phi$ for $j < 4n\trunc$ by \cref{lem:quant_conc}:
    \begin{align*}
        \sum_{y \in \mc{B}} w_y y_i^2 &\leq \sum_{j = 1}^{4n\trunc} \sum_{y \in \mc{B}_j} w_y y^2 \leq \frac{1}{cn} \sum_{j = 1}^{4n\trunc} \sum_{y \in \mc{B}_j} \wt{w}_y \tau_{j + 1}^2 = \frac{C}{n} \sum_{j = 1}^{4n\trunc} \sum_{y \in \mc{B}_j} \exp\lprp{-C \cdot \frac{3nj\trunc}{2(n\trunc + j)}} \tau_{j + 1}^2 \\
        &\leq \frac{C}{n} \sum_{j = 1}^{4n\trunc} \frac{2 (n\trunc + j)}{3j} \cdot \exp\lprp{-C j} \tau_j^2 \leq \frac{C}{n} \sum_{j = 1}^{4n\trunc} \frac{n\trunc + j}{j} \cdot \exp(-C j) \cdot e^{8j} \lprp{\frac{1}{\trunc^2} \cdot \frac{\Tr (\Sigma)}{n} + \frac{1}{\trunc}} \\
        &\leq \frac{C}{n} \sum_{j = 1}^{4n\trunc} (n\trunc + j) \cdot \exp(-j) \cdot \lprp{\frac{1}{\trunc^2} \cdot \frac{\Tr (\Sigma)}{n} + \frac{1}{\trunc}} \leq \frac{C}{n} \lprp{\sum_{j = 1}^{4n\trunc} (n\trunc + j) e^{-j}} \cdot \lprp{\frac{1}{\trunc^2} \cdot \frac{\Tr (\Sigma)}{n} + \frac{1}{\trunc}} \\
        &\leq \frac{C}{n} \cdot n\trunc \cdot \lprp{\frac{1}{\trunc^2} \cdot \frac{\Tr (\Sigma)}{n} + \frac{1}{\trunc}} \leq C \lprp{\frac{\Tr (\Sigma)}{\trunc n} + 1}
    \end{align*}
    which concludes the proof of the lemma.
\end{proof}

The next lemma is a refinement of a Gaussian rounding scheme by Depersin and Lecue. These improvements along with \cref{lem:spec_norm_bnd} allow us to establish that the stability-based estimators achieve sub-Gaussian recovery guarantees. On the other hand, in prior work, the guarantees incurred an additional multiplicative logarithmic factor. Before we proceed, we define the convex program analyzed in the proof:
\begin{equation*}
    \min_{w \in \mc{W}_\rho} \norm*{\sum_{i = 1}^n w_i z_iz_i^\top}. \tag{ROB-SDP} \label{eq:robsdp} 
\end{equation*}
For a dataset $\bm{Z}$ and $\rho$, let \ref{eq:robsdp}$(\bm{Z}, \rho)$ refer to the convex program above. The next lemma establishes a vectorization of the above program.

\begin{lemma}
    \label{lem:sdp_rounding}
    Let $\bm{Z} = \{z_1, \dots z_n\} \subset \R^d$ and $\rho \in [0, 1/2]$. Then:
    \begin{equation*}
        \min_{w \in \mc{W}_\rho} \norm*{\sum_{i = 1}^n w_i z_iz_i^\top} \leq 1024 \max_{\norm{v} = 1} \min_{w \in \mc{W}_{\rho / 4}} \sum_{i = 1}^n w_i \inp{v}{z_i}^2.
    \end{equation*}
\end{lemma}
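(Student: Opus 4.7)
The plan is to apply Sion's minimax to dualize the spectral norm into an SDP, and then pay the relaxation $\rho \to \rho/4$ via a Gaussian rounding scheme in the spirit of Depersin--Lecue.

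Since $\sum_i w_i z_i z_i^\top \succeq 0$, its spectral norm equals $\max_{M \succeq 0,\,\Tr(M)=1} \sum_i w_i \inp{z_i}{M z_i}$. The bilinear function $F(w, M) := \sum_i w_i \inp{z_i}{M z_i}$ on the convex compact domains $\mc{W}_\rho$ and $\Delta := \{M \succeq 0 : \Tr(M) = 1\}$ satisfies the hypotheses of Sion's minimax theorem, so the LHS of the lemma equals $\max_{M \in \Delta} \min_{w \in \mc{W}_\rho} \sum_i w_i \inp{z_i}{M z_i}$. Writing $q_i := \inp{z_i}{M z_i}$ and $\gamma := \max_{\norm{v}=1} \min_{w \in \mc{W}_{\rho/4}} \sum_i w_i \inp{v}{z_i}^2$, it therefore suffices to prove that for every such $M$,
\[
\min_{w \in \mc{W}_\rho} \sum_i w_i q_i \le 1024 \gamma.
\]

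Fix $M \in \Delta$ and draw $g \sim N(0, M)$; set $Y_i := \inp{g}{z_i}^2$, and note that $\E[Y_i] = q_i$ and $\E[Y_i^2] = 3 q_i^2$ since $\inp{g}{z_i} \sim N(0, q_i)$. For each $g \ne 0$, the unit vector $v_g := g/\norm{g}$ combined with the definition of $\gamma$ produces a truncation $R_g \subseteq [n]$ of size at most $\rho n/4$ (the indices of the top-$\rho n/4$ values of $\{Y_i\}$) such that $\sum_{i \notin R_g} Y_i \le (1 - \rho/4)\, n \gamma \norm{g}^2$. Taking expectations and using $\E[\norm{g}^2] = \Tr(M) \le 1$ yields $\E_g \lsrs{\sum_{i \notin R_g} Y_i} \le n \gamma$. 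To convert this randomized truncation into a deterministic one, set $p_i := \P_g[i \in R_g]$ and $B := \{i : p_i \ge 1/4\}$; since $\sum_i p_i = \E \abs{R_g} \le \rho n/4$, Markov gives $\abs{B} \le \rho n$, so $w^*_i := \bm{1}\{i \notin B\}/(n - \abs{B})$ lies in $\mc{W}_\rho$. For $i \notin B$, Cauchy--Schwarz gives $\E[Y_i \bm{1}\{i \in R_g\}] \le \sqrt{3 q_i^2 \cdot p_i} \le \tfrac{\sqrt{3}}{2} q_i$, hence $\E[Y_i \bm{1}\{i \notin R_g\}] \ge (1 - \tfrac{\sqrt{3}}{2}) q_i$. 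Summing over $i \notin B$ and combining with the upper bound from the Gaussian rounding yields $\sum_{i \notin B} q_i \le n\gamma/(1 - \sqrt{3}/2)$, whence $\sum_i w^*_i q_i$ is a small absolute-constant multiple of $\gamma$, well within the claimed bound $1024\gamma$.

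The main obstacle is the asymmetry between the sample-dependent truncations $R_g$ and the single deterministic truncation required by $\mc{W}_\rho$: each Gaussian draw $g$ identifies a different ``bad'' subset, and we must aggregate these into one universal subset $B$ of small size. This is precisely where the parameter loss $\rho/4 \to \rho$ is spent --- enough slack is needed both to absorb the Markov factor when defining $B$ from the pointwise probabilities $p_i$, and to recover a two-sided estimate on $q_i$ from the conditional first moment $\E[Y_i \bm{1}\{i \notin R_g\}]$ through Cauchy--Schwarz on its second moment.
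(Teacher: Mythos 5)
Your proof is correct and takes a genuinely different route from the paper's. Both arguments begin by dualizing the spectral norm via a minimax theorem (you invoke Sion's, the paper invokes von Neumann's) and both rely on Gaussian rounding with $g \sim N(0, M)$. However, from there the strategies diverge substantially. The paper splits into two cases based on the size of the set $\mc{H}$ of ``heavy'' indices with $z_i^\top M z_i \geq m^*/(4\rho)$ and, in each case, uses a probabilistic-method argument (via a union bound on anticoncentration of $\inp{z_i}{g}$ and concentration of $\norm{g}$) to extract a single good direction $\wt{g}$ for which the truncated quadratic is already $\Omega(m^*)$. You instead avoid the case split entirely: you observe that $Y_i := \inp{g}{z_i}^2$ has exactly computable first and second moments ($\E Y_i = q_i$, $\E Y_i^2 = 3q_i^2$), aggregate the $g$-dependent truncation sets $R_g$ into a single deterministic bad set $B$ via Markov's inequality on $p_i = \P[i \in R_g]$, and then use Cauchy--Schwarz to show that for $i \notin B$ the conditional contribution $\E[Y_i \mathbf{1}\{i \notin R_g\}]$ retains a constant fraction of $q_i$. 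This averaging-then-Markov approach is arguably cleaner, and it yields a visibly smaller constant (around $2/(1-\sqrt{3}/2)\approx 15$ after the $1/(1-\rho)\leq 2$ loss) than the paper's $1024$. One small point worth tightening in a final write-up: when $\rho n/4$ is not an integer, the set $R_g$ of zero-weight indices of the $\mc{W}_{\rho/4}$-minimizer has size up to $\lceil \rho n/4\rceil$, which adds a $+1$ in the Markov count $\sum_i p_i$ and hence a $+4$ in $|B|$; the paper's proof glosses over the same kind of rounding, and your large slack in the final constant absorbs it comfortably, but it deserves a sentence.
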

\begin{proof}
    Note that \ref{eq:robsdp} may be recast as the following min-max program.
    \begin{equation*}
        \min_{w \in \mc{W}_\rho} \max_{\substack{M \succcurlyeq 0 \\ \Tr (M) = 1}} \inp*{M}{\sum_{i = 1}^n w_i z_iz_i^\top} = \max_{\substack{M \succcurlyeq 0 \\ \Tr (M) = 1}} \min_{w \in \mc{W}_\rho} \inp*{M}{\sum_{i = 1}^n w_i z_iz_i^\top} 
    \end{equation*}
    where the exchange of the min and max follows from von Neumann's minimax theorem. Let $M^*$ and $m^*$ denote the optimal solution and value to the program on the right. Consider now a Gaussian random vector $g$ drawn with mean $0$ and covariance $M$. Note that for any $i \in [n]$, $\inp{z_i}{g}$ is a Gaussian with mean $0$ and variance $z_i^\top M z_i$. Therefore, we have:
    \begin{equation*}
        \forall i \in [n]: \P \lbrb{\abs{\inp{z_i}{g}} \geq \frac{\sqrt{z_i^\top M z_i}}{4}} \geq \frac{3}{4}
    \end{equation*}
    from the fact that the pdf of a standard Gaussian random variable is bounded above by $1 / \sqrt{2\pi}$. Furthermore, we have from the fact that $\norm{M} \leq 1$ that:
    \begin{equation*}
        \P \lbrb{\norm{g} \leq 4} \geq \frac{9}{10}.
    \end{equation*}
    Hence, we get by the union bound that:
    \begin{equation}
        \label{eq:rounding_prob_bnd}
        \P \lbrb{\norm{g} \leq 4 \text{ and } \abs{\inp{z_i}{g}} \geq \frac{\sqrt{z_i^\top M z_i}}{4}} \geq \frac{1}{2}.
    \end{equation}
    For the rest of the proof, we divide into two cases based on the set:
    \begin{equation*}
        \mc{H} \coloneqq \lbrb{i: z_i^\top M z_i \geq \frac{m^*}{4 \rho}}.
    \end{equation*}
    The two cases we consider are as follows:
    \paragraph*{Case 1:} $\abs{\mc{H}} \geq \rho n$. In this case, consider $g$ which satisfies:
    \begin{equation*}
        \sum_{i \in \mc{H}} \bm{1} \lbrb{\abs{\inp{z_i}{g}} \geq \frac{1}{8} \sqrt{\frac{m^*}{\rho}} \text{ and } \norm{g} \leq 4} \geq \frac{\rho n}{2}.
    \end{equation*}
    Such a $g$ exists from \cref{eq:rounding_prob_bnd}. On this event, we have for the vector $\wt{g} = g / \norm{g}$:
    \begin{equation*}
        \min_{w \in \mc{W}_{\rho / 4}} \sum_{i = 1}^n w_i \inp{z_i}{\wt{g}}^2 \geq \sum_{i \in \mc{H}} w_i \inp{z_i}{\wt{g}}^2 \geq \frac{\rho}{4} \cdot \frac{1}{16} \cdot \frac{m^*}{64\rho} \geq \frac{m^*}{1024}.
    \end{equation*}
    This concludes the proof of the lemma in this case.
    
    \paragraph*{Case 2:} $\abs{\mc{H}} < \rho n$. Then, we must have by the minimax formulation:
    \begin{equation*}
        \frac{1}{n - \abs{\mc{H}}} \sum_{i \notin \mc{H}} z_i^\top M z_i \geq m^*
    \end{equation*}
    as this is a feasible solution to the minimax optimization problem. Now, consider the event:
    \begin{equation*}
        \frac{1}{n - \abs{\mc{H}}} \sum_{i \notin \mc{H}} z_i^\top M z_i \cdot \bm{1} \lbrb{\abs{\inp{z_i}{g}} \geq \frac{\sqrt{z_i^\top M z_i}}{4} \text{ and } \norm{g} \leq 4} \geq \frac{m^*}{2}.
    \end{equation*}
    Note that such a $g$ exists by the probabilistic method and \cref{eq:rounding_prob_bnd}. Furthermore, we have for any $\mc{G} \subseteq [n] \setminus \mc{H}$ with $\abs{\mc{G}} \leq \frac{\rho n}{4}$ from the definition of $\mc{H}$:
    \begin{equation*}
        \frac{1}{n - \abs{\mc{H}}} \sum_{i \in \mc{G}} z_i^\top M z_i \leq \frac{2}{n} \cdot \frac{\rho n}{4} \cdot \frac{m^*}{4 \rho} = \frac{m^*}{8}.
    \end{equation*}
    Hence, we get for any subset $\mc{I} \subset [n] \setminus \mc{H}$ with $\mc{I} \geq n - \abs{H} - \rho n / 4$:
    \begin{equation*}
        \frac{1}{n} \sum_{i \in \mc{I}} z_i^\top M z_i \bm{1} \lbrb{\abs{\inp{z_i}{g}} \geq \frac{\sqrt{z_i^\top M z_i}}{4}} \geq \frac{m^*}{4}.
    \end{equation*}
    Noticing that for $\wt{g} = g / \norm{g}$:
    \begin{equation*}
        z_i^\top M z_i \bm{1} \lbrb{\abs{\inp{z_i}{g}} \geq \frac{\sqrt{z_i^\top M z_i}}{4}} \leq 256 \inp{z_i}{\wt{g}}^2 \bm{1} \lbrb{\abs{\inp{z_i}{g}} \geq \frac{\sqrt{z_i^\top M z_i}}{4}},
    \end{equation*}
    we get that for all such $\mc{I}$:
    \begin{equation*}
        \frac{1}{n} \sum_{i \in \mc{I}} \inp{z_i}{\wt{g}}^2 \bm{1} \lbrb{\abs{\inp{z_i}{g}} \geq \frac{\sqrt{z_i^\top M z_i}}{4}} \geq \frac{m^*}{1024}.
    \end{equation*}
    We get as a consequence
    \begin{equation*}
        \min_{w \in \mc{W}_{\rho / 4}} \sum_{i = 1}^n w_i \inp{z_i}{\wt{g}}^2 \geq \min_{\substack{\mc{I} \subset [n] \setminus \mc{H} \\ \abs{\mc{I}} \geq n - \abs{\mc{H}} - \rho n / 4}} \frac{1}{n} \sum_{i \in \mc{I}} \inp{z_i}{\wt{g}}^2 \geq \frac{m^*}{1024}
    \end{equation*}
    which concludes this case and the proof of the lemma.
\end{proof}

The final technical result required to prove \cref{thm:sub_g_main} is the following which shows that the mean of the truncated data points when projected onto any direction is close to the true mean of the distribution. The proof is identical to that in \cite{lmtrimmed} and its proof is included in \cref{sec:deferred_proofs} for completion.
\begin{restatable}{lemma}{truncmeanbnd}
    \label{lem:trunc_mean_bnd}
    We have:
    \begin{equation*}
        \forall \norm{v} = 1: \sum_{i = 1}^n \phi_\tau (\inp{X_i}{v}) \leq C \cdot \lprp{\sqrt{n \Tr (\Sigma)} + n\sqrt{\trunc}}
    \end{equation*}
    with probability at least $1 - \exp (- \trunc n)$.
\end{restatable}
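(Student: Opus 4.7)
The plan is to follow the empirical-process strategy from the proof of \cref{lem:quant_conc}, adapted with the odd, $1$-Lipschitz truncation $\phi_\tau$ in place of the bump function $\psi_j$. First, decompose
\[
\sum_{i=1}^n \phi_\tau(\inp{X_i}{v}) \;=\; \sum_{i=1}^n \lprp{\phi_\tau(\inp{X_i}{v}) - \E_{X \thicksim \mc{D}}[\phi_\tau(\inp{X}{v})]} \;+\; n \cdot \E_{X \thicksim \mc{D}}[\phi_\tau(\inp{X}{v})]
\]
and bound each piece uniformly over $\norm{v} = 1$.

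For the bias term, the normalization $\mu = 0$ gives $\E[\inp{X}{v}] = 0$, so by Cauchy--Schwarz and Chebyshev, $|\E[\phi_\tau(\inp{X}{v})]| = |\E[\phi_\tau(\inp{X}{v}) - \inp{X}{v}]| \leq \E[|\inp{X}{v}| \bm{1} \lbrb{|\inp{X}{v}| > \tau}] \leq v^\top \Sigma v / \tau \leq 1/\tau$. Substituting the definition of $\tau$ shows $n/\tau \lesssim \sqrt{n \Tr(\Sigma)} + n\sqrt{\trunc}$ in both regimes of the maximum defining $\tau$, so the bias contribution is absorbed into the right-hand side.

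For the centered supremum $Z \coloneqq \sup_{\norm{v}=1} \abs{\sum_i W_{i,v}}$ with $W_{i,v} = \phi_\tau(\inp{X_i}{v}) - \E[\phi_\tau(\inp{X}{v})]$, standard symmetrization followed by the Ledoux--Talagrand contraction inequality (\cref{cor:ledtal}, applicable since $\phi_\tau$ is $1$-Lipschitz with $\phi_\tau(0)=0$) yields $\E[Z] \leq 4 \E \norm*{\sum_i \gamma_i X_i} \leq 4\sqrt{n \Tr(\Sigma)}$, exactly as in the proof of \cref{lem:quant_conc} but without the $1/\tau_j$ blow-up. Combined with the pointwise variance estimate $\E[\phi_\tau(\inp{X}{v})^2] \leq v^\top \Sigma v \leq 1$ and the uniform envelope $|W_{i,v}| \leq 2\tau$, Bousquet's inequality (\cref{thm:bousequet_thm}) at deviation $r \asymp \sqrt{n \Tr(\Sigma)} + n\sqrt{\trunc}$ delivers failure probability at most $\exp(-\trunc n)$.

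The main technical obstacle is verifying the Bousquet exponent in both regimes of $\tau$. When the first term $\sqrt{\Tr(\Sigma)}/(n\trunc)$ dominates, the variance-linear term in Talagrand's bound supplies the $\sqrt{n \Tr(\Sigma)}$ contribution while the envelope correction $\tau \trunc n \asymp \sqrt{\Tr(\Sigma)}$ is lower order; when $\tau \asymp 1/\sqrt{\trunc}$ dominates, the Bennett-type correction $\tau \cdot \trunc n \asymp n\sqrt{\trunc}$ directly furnishes the required deviation, with the $\log(r/\nu)$ factor comfortably absorbing $\trunc n$. Combining the bias control with the concentrated supremum then yields the stated inequality.
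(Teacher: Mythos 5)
Your proposal follows essentially the same route as the paper's proof: center the sum, bound the bias by $n/\tau \lesssim n\sqrt{\trunc}$, bound $\E Z \lesssim \sqrt{n\Tr(\Sigma)}$ via symmetrization and Ledoux--Talagrand contraction, and invoke Bousquet with variance proxy $\leq 1$ and envelope $2\tau$ at deviation $r \asymp n\trunc\tau \asymp \sqrt{n\Tr(\Sigma)} + n\sqrt{\trunc}$. There are a couple of harmless slips in your regime descriptions (the first branch of $\tau$ is $\trunc^{-1}\sqrt{\Tr(\Sigma)/n}$, not $\sqrt{\Tr(\Sigma)}/(n\trunc)$, and in that regime $\tau\trunc n \asymp \sqrt{n\Tr(\Sigma)}$, not $\sqrt{\Tr(\Sigma)}$), but these do not affect the argument.
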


We are now ready to prove \cref{thm:sub_g_main}. We will condition on the conclusions of \cref{lem:trunc_mean_bnd,lem:quant_conc,lem:spec_norm_bnd,lem:wt_sp_bnd}. Note that this happens with probability at least $1 - \delta$. Observe that we have from \cref{lem:quant_conc}:
\begin{equation*}
    \forall \norm{v} = 1: \sum_{i = 1}^n \bm{1} \lbrb{\abs{\inp{X_i}{v}} \geq \tau} \leq \frac{\trunc n}{4}.
\end{equation*}
As a consequence, we get from \cref{lem:trunc_mean_bnd} for any $\norm{v} = 1$:
\begin{align*}
    \abs*{\sum_{i = 1}^n \inp{X_i}{v} \bm{1} \lbrb{\abs{\inp{X_i}{v}} \leq \tau}} &\leq C \lprp{\sqrt{n \Tr (\Sigma)} + n\sqrt{\trunc}} + \tau \sum_{i = 1}^n \bm{1} \lbrb{\abs{\inp{X_i}{v}} \geq \tau} \\
    &\leq C \cdot \lprp{\sqrt{n \Tr (\Sigma)} + n\sqrt{\trunc}}.
\end{align*}
Furthermore, we have from \cref{lem:spec_norm_bnd} that:
\begin{equation*}
    \sum_{i = 1}^n \inp{X_i}{v}^2 \bm{1} \lbrb{\abs{\inp{X_i}{v}} \leq \tau} \leq C \lprp{\frac{\Tr (\Sigma)}{\trunc} + n}.
\end{equation*}
As a consequence of the above, \cref{lem:sdp_rounding} yields for $x = \bm{0}$:
\begin{equation*}
    \min_{w \in \mc{W}_\trunc} \norm*{\sum_{i = 1}^n w_i (X_i - x)(X_i - x)^\top} \leq C \lprp{\frac{\Tr (\Sigma)}{\trunc n} + 1}.
\end{equation*}
Therefore, $\wt{\mu}$ satisfies:
\begin{equation*}
    \min_{w \in \mc{W}_\trunc} \norm*{\sum_{i = 1}^n w_i (X_i - \wt{\mu})(X_i - \wt{\mu})^\top} \leq C \lprp{\frac{\Tr (\Sigma)}{\trunc n} + 1}.
\end{equation*}
Let $w^*$ be the optimal solution to the above program and notice that $\wt{\mu} = \sum_{i = 1}^n w^*_i X_i$. Furthermore, let $w'$ be the uniform distribution over the set $\{i: \abs{\inp{X_i}{v}} \leq \tau\}$, $\mu'$ denote its mean and observe $w' \in \mc{W}_\tau$. Then, we have by \cref{lem:tv_var_mn_bnd} that:
\begin{equation*}
    \abs{\inp{\wt{\mu}}{v} - \mu'} \leq C \lprp{\sqrt{\frac{\Tr (\Sigma)}{n}} + \sqrt{\trunc}}.
\end{equation*}
As a consequence, since the above holds for all $\norm{v} = 1$, we have:
\begin{equation*}
    \norm{\wt{\mu} - \mu} = \norm{\wt{\mu}} \leq C \cdot \lprp{\sqrt{\frac{\Tr (\Sigma)}{n}} + \sqrt{\trunc}}.
\end{equation*}
Now, consider any $u$ such that $\norm{u} = 1$. We have
\begin{align*}
    \spread \lprp{\{\inp{X_i}{u}\}_{i = 1}^n, \inp{\wt{\mu}}{u}} &= \sqrt{\sum_{i = 1}^n w_i (\inp{X_i}{u} - \inp{\wt{\mu}}{u})^2} \leq \sqrt{2 \sum_{i = 1}^n w_i (\inp{X_i}{u}^2 + \inp{\wt{\mu}}{u}^2)} \\
    &\leq \sqrt{2} \cdot (\spread \lprp{\{\inp{X_i}{u}\}_{i = 1}^n, \inp{\wt{\mu}}{u}} + \norm{\wt{\mu}}) \leq C \cdot \lprp{\sqrt{\frac{\Tr (\Sigma)}{\trunc n}} + 1}.
\end{align*}
Therefore, we get:
\begin{equation*}
    \norm{\wh{\mu}} \leq \norm{\wt{\mu}} + \sqrt{\trunc} \cdot \spread \lprp{\{\inp{X_i}{u}\}_{i = 1}^n, \inp{\wt{\mu}}{u}} \leq C \lprp{\sqrt{\frac{\Tr (\Sigma)}{n}} + \sqrt{\frac{\log (1 / \delta)}{n}}}
\end{equation*}
thus establishing the theorem. \qed

\subsection{When is \cref{alg:sub_g_main} adversarially robust?}
\label{ssec:when_sub_g_adv_rob}

In this section, we establish the following result which proves that for \cref{alg:sub_g_main} to be adversarially robust, the set of points provided as input is required to have a stable set that includes an arbitrarily large fraction of the input points. Informally, an overwhelming fraction of outliers must either be filtered out, or their inclusion in the set of good points used to compute the mean does not substantially change the value of the estimate.

\begin{theorem}
    \label{thm:gen_conc}
    There exists an absolute constant $c' > 0$ and for any constant $c \in [0, 1]$, there exists an absolute constant $C > 0$ such that the following holds. Let $\bm{Z} = \{z_i\}_{i = 1}^n \subset \R^d$ be a pointset, $\trunc \in [0, c']$ and $\eta \geq 0$. Suppose there exists $z \in \R^d$ such that the outputs of \cref{alg:sub_g_main}, $\lbrb{\wh{\mu}_{+}, \wh{\mu}_{-}}$, when run with inputs $\bm{Z}$, $\delta = \exp (-\trunc n)$, and $s = \{+1, -1\}$ respectively satisfy:
    \begin{equation*}
        \max \lbrb{\norm{\wh{\mu}_{+} - z}, \norm{\wh{\mu}_{-} - z}} \leq \eta.
    \end{equation*}
    Then, there exists a subset $\mc{I} \subset [n]$ and $\abs{\mc{I}} \geq 1 - c \eta n$ satisfying:
    \begin{gather*}
        \frac{1}{\abs{\mc{I}}} \sum_{i \in \mc{I}} (z_i - \mu) (z_i - \mu)^\top \preccurlyeq C \frac{\eta^2}{\trunc} \quad \text{and} \quad \norm{\mu - z} \leq C \eta
        \quad \text{where} \quad \mu \coloneqq \frac{1}{\abs{\mc{I}}} \sum_{i = 1}^n z_i.
    \end{gather*}
\end{theorem}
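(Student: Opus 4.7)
The plan is to leverage the pairing $\wh{\mu}_{\pm} = \wt{\mu} \pm \sqrt{\trunc}\sigma_v v$ of the algorithm's two outputs to extract structure from the hypothesis, and then combine three ingredients: the explicit weighting formula inside $\spread$, the rounding lemma \cref{lem:sdp_rounding}, and a standard weight-to-subset conversion. First I would note that the midpoint identity $\wt{\mu} = (\wh{\mu}_+ + \wh{\mu}_-)/2$ gives $\norm{\wt{\mu} - z} \leq \eta$ by the triangle inequality, while $\wh{\mu}_+ - \wh{\mu}_- = 2\sqrt{\trunc}\sigma_v v$ together with $\norm{v} = 1$ (unchanged by \comp) yields $\sigma_v \leq \eta/\sqrt{\trunc}$. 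Since $v$ is the argmax of the spread, this propagates to the uniform bound $\spread(\{\inp{z_i}{u}\}_{i=1}^n, \inp{\wt{\mu}}{u}) \leq \eta/\sqrt{\trunc}$ for every unit vector $u$.

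Next I would translate this spread bound into a form compatible with \cref{lem:sdp_rounding}. Fix a unit $u$ and let $\mc{J}(u)$ denote the set of indices whose projection $\inp{z_i}{u}$ has rank position $i$ satisfying $\min(i, n-i) \geq c_0 n \trunc$, for a small constant $c_0$ to be chosen; so $|\mc{J}(u)| \geq (1 - 2c_0 \trunc)n$. Since $\wt{w}_i = \exp(-C n\trunc/\min(i, n-i))$, each such index carries normalized spread weight at least $c_1(c_0)/n$, and the spread bound gives
\begin{equation*}
    \frac{1}{|\mc{J}(u)|}\sum_{i \in \mc{J}(u)} (\inp{z_i - \wt{\mu}}{u})^2 \leq \frac{C_2 \eta^2}{\trunc}.
\end{equation*}
Since the uniform distribution on $\mc{J}(u)$ lies in $\mc{W}_{2c_0\trunc}$, this yields $\min_{w \in \mc{W}_{2c_0\trunc}} \sum_i w_i (\inp{z_i - \wt{\mu}}{u})^2 \leq C_2\eta^2/\trunc$ for every unit $u$. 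Applying \cref{lem:sdp_rounding} with $\rho := 8c_0\trunc$ then produces $w^* \in \mc{W}_\rho$ with $\norm*{\sum_i w_i^* (z_i - \wt{\mu})(z_i - \wt{\mu})^\top} \leq C_3 \eta^2/\trunc$.

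For the extraction step, given any target $c \in (0,1)$, I would set $\mc{I} := \{i : w_i^* \geq c/(2n)\}$. From $\sum_{i \notin \mc{I}} w_i^* \leq c/2$ and $w_i^* \leq 1/((1-\rho)n)$, I obtain $|\mc{I}| \geq (1-c/2)(1-\rho)n \geq (1-c)n$ once $\rho = O(\trunc)$ is sufficiently small. Setting $M_i := (z_i - \wt{\mu})(z_i - \wt{\mu})^\top$, PSD positivity gives $(c/(2n))\sum_{i \in \mc{I}} M_i \preccurlyeq \sum_i w_i^* M_i$, and hence $\norm{|\mc{I}|^{-1}\sum_{i \in \mc{I}} M_i} \leq C(c)\eta^2/\trunc$. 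Defining $\mu := |\mc{I}|^{-1}\sum_{i \in \mc{I}} z_i$ and using the identity
\begin{equation*}
    \frac{1}{|\mc{I}|}\sum_{i \in \mc{I}}(z_i - \wt{\mu})(z_i - \wt{\mu})^\top = \frac{1}{|\mc{I}|}\sum_{i \in \mc{I}}(z_i-\mu)(z_i-\mu)^\top + (\mu - \wt{\mu})(\mu - \wt{\mu})^\top
\end{equation*}
and the PSD positivity of both summands on the right delivers the required second-moment bound at $\mu$ together with $\norm{\mu - \wt{\mu}}^2 \leq C(c)\eta^2/\trunc$. In the target regime where $\trunc$ is lower bounded by an absolute constant (which matches the setup $\log(1/\delta) = \Omega(n)$ in \cref{thm:sub_g_informal}), this gives $\norm{\mu - \wt{\mu}} = O(\eta)$, and combining with $\norm{\wt{\mu} - z} \leq \eta$ from the first step produces the claimed mean bound.

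The main obstacle is the second paragraph. The spread weights depend on ranks of the projections $\inp{z_i}{u}$, not on their distances to $\inp{\wt{\mu}}{u}$, so a priori $\inp{\wt{\mu}}{u}$ could sit at an extreme rank and leave the middle-ranked projections far from it. The bound on the spread itself forbids this: it directly controls the sum of squared distances to $\inp{\wt{\mu}}{u}$ over the middle-ranked indices, so the uniform distribution on $\mc{J}(u)$ becomes an admissible $\mc{W}_{2c_0\trunc}$ candidate with bounded objective. The rounding lemma then combines these direction-dependent certificates into a single, direction-uniform weighted solution, which the final subset rounding converts to the stable subset demanded by the conclusion.
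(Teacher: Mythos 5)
Your proof follows the paper's structure closely up to the extraction of $\mc{I}$: the midpoint and parallelogram identities give $\norm{\wt\mu - z}\le\eta$ and $\sigma_v \le \eta/\sqrt{\trunc}$; the uniform spread bound, the observation that the smoothed weights are uniformly comparable to $1/n$ on the middle ranks, the passage through \cref{lem:sdp_rounding}, and the thresholding of $w^*$ to produce $\mc{I}$ all match the paper's argument in both idea and technique.

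The gap is the final mean bound. Your PSD decomposition
\begin{equation*}
    \frac{1}{|\mc{I}|}\sum_{i\in\mc{I}}(z_i-\wt\mu)(z_i-\wt\mu)^\top=\frac{1}{|\mc{I}|}\sum_{i\in\mc{I}}(z_i-\mu)(z_i-\mu)^\top+(\mu-\wt\mu)(\mu-\wt\mu)^\top
\end{equation*}
only yields $\norm{\mu-\wt\mu}^2\le C\eta^2/\trunc$, i.e.\ $\norm{\mu-\wt\mu}\le C\eta/\sqrt{\trunc}$, which is off by a factor of $1/\sqrt{\trunc}$ from the claimed $\norm{\mu-z}\le C\eta$. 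You acknowledge this by retreating to the regime $\trunc=\Omega(1)$, but \cref{thm:gen_conc} allows $\trunc\in[0,c']$ to be arbitrarily small, so as written the proof does not establish the stated theorem. The paper closes the factor by a different argument: $\wt\mu$ is itself a weighted mean $\sum_iw^\dagger_iz_i$ for the optimal $w^\dagger\in\mc{W}_\trunc$ of the \ref{eq:robsdp} program, with weighted variance also at most $C\eta^2/\trunc$; since the uniform weights $w_{\mc{I}}$ on $\mc{I}$ likewise lie in $\mc{W}_\trunc$ and have variance $C\eta^2/\trunc$, \cref{lem:tv_wr_bnd} gives $\tv(w_{\mc{I}},w^\dagger)\le2\trunc$ and \cref{lem:tv_var_mn_bnd} then bounds $|\inp{v}{\mu}-\inp{v}{\wt\mu}|\le2\sqrt{\trunc}\,(C\eta/\sqrt{\trunc}+C\eta/\sqrt{\trunc})=C\eta$ for every unit $v$. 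The $\sqrt{\trunc}$ prefactor from the small TV distance exactly cancels the $1/\sqrt{\trunc}$ in the variance bound; your approach throws that cancellation away.
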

\begin{proof}
    Noting that $\wh{\mu}_{+} = \wt{\mu} + \sqrt{\eps} \sigma_v v$ and $\wh{\mu}_{-} = \wt{\mu} - \sqrt{\eps} \sigma_v v$, we must have $\norm{\wt{\mu} - z} \leq \eta$ by convexity. Now, we have by the parallelogram law:
    \begin{equation*}
        2 \eta^2 \geq \norm{\wh{\mu}_{+} - z}^2 + \norm{\wh{\mu}_{-} - z}^2 = 2(\norm{\wt{\mu} - z}^2 + \eps \sigma_v^2) \implies \sigma_v^2 \leq \frac{\eta^2}{\eps}. 
    \end{equation*}
    Notice that in the computation of $\spread$, for any $c_1 \trunc n / 16 \leq i \leq n - c_1 \trunc n / 16$, we have $c \leq \wt{w}_{i} \leq 1$ and as a consequence, we have:
    \begin{equation*}
        cn \leq \sum_{i = 1}^n \wt{w}_i \leq n \implies \forall\, \frac{c_1 \trunc n}{16} \leq i \leq n - \frac{c_1 \trunc n}{16}: \frac{c}{n} \leq w_i \leq \frac{1}{cn}.
    \end{equation*}
    As a consequence, we have for $\norm{v} = 1$:
    \begin{equation*}
        \min_{w \in \mc{W}_{c_1 \trunc / 16}} \sum_{i = 1}^n w_i \inp{v}{z_i - \wt{\mu}}^2 \leq C \cdot \frac{\eta^2}{\trunc}.
    \end{equation*}
    From \cref{lem:sdp_rounding}, we also get that:
    \begin{equation*}
        \min_{w \in \mc{W}_{c_1 \trunc / 4}} \sum_{i = 1}^n w_i (z_i - \wt{\mu}) (z_i - \wt{\mu})^\top \preccurlyeq C \frac{\eta^2}{\trunc}.
    \end{equation*}
    Let $w^*$ be the solution to the above problem. Now, consider the set:
    \begin{equation*}
        \mc{I} \coloneqq \lbrb{i: w^*_i \geq \frac{1}{2(1 - c_1 \trunc / 4) n}}.
    \end{equation*}
    To bound the size of $\mc{I}$, observe that:
    \begin{equation*}
        \abs{\mc{I}} \cdot \frac{1}{(1 - c_1 \trunc / 4)n} + (n - \abs{\mc{I}}) \cdot \frac{1}{2 (1 - c_1 \trunc / 4)n} \geq 1 \implies \abs{\mc{I}} \geq \lprp{1 - \frac{c_1 \trunc}{2}} n. 
    \end{equation*}
    Hence, we get for this set by the definition of $\mc{I}$:
    \begin{equation*}
        \frac{1}{\abs{\mc{I}}} \sum_{i \in \mc{I}} w_i (z_i - \mu) (z_i - \mu)^\top \preccurlyeq \frac{1}{\abs{\mc{I}}} \sum_{i \in \mc{I}} w_i (z_i - \wt{\mu}) (z_i - \wt{\mu})^\top \preccurlyeq 2 \sum_{i = 1}^n w_i (z_i - \wt{\mu})(z_i - \wt{\mu})^\top \preccurlyeq C\frac{\eta^2}{\trunc}.
    \end{equation*}
    Finally, to conclude the proof, note that there exists $w^\dagger \in \mc{W}_\trunc$ such that:
    \begin{equation*}
        w^\dagger \coloneqq \argmin_{w \in \mc{W}_\trunc} \norm*{\sum_{i = 1}^n w_i (z_i - \wt{\mu}) (z_i - \wt{\mu})^\top} \text{ and } \wt{\mu} = \sum_{i = 1}^n w^\dagger_i z_i.
    \end{equation*}
    Note, also that:
    \begin{equation*}
        \sum_{i = 1}^n \norm*{w^\dagger_i (z_i - \wt{\mu}) (z_i - \wt{\mu})^\top} \preccurlyeq C \frac{\eta^2}{\trunc}.
    \end{equation*}
    Observe, now, that the uniform distribution over $\mc{I}$, $w_{\mc{I}}$ and the distribution $w^\dagger$ both satisfy $w_{\mc{I}}, w^\dagger \in \mc{W}_{\trunc}$. From \cref{lem:tv_wr_bnd} and applying \cref{lem:tv_var_mn_bnd} we get:
    \begin{equation*}
        \forall \norm{v} = 1: \abs{\inp{v}{\mu} - \inp{v}{\wt{\mu}}} \leq C \eta.
    \end{equation*}
    As a consequence, we obtain:
    \begin{equation*}
        \norm{\mu - \wt{\mu}} \leq C \eta
    \end{equation*}
    concluding the proof of the theorem.
\end{proof}

\paragraph{Acknowledgements:} The authors would like to thank Sam Hopkins for proposing the project as part of a course on Algorithmic Statistics.

\bibliographystyle{alpha}
\bibliography{refs}

\appendix

\section{Auxiliary Technical Results}

\subsection{Probability and Empirical Process Theory}

We present Bousquet's inequality on the suprema of empirical processes \cite{bousquetthesis} which builds on prior results by Talagrand \cite{talagrandsharper,talagrandnewconcentration}.

\begin{theorem}[\cite{bousquetthesis,boucheron2013concentration}]
    \label{thm:bousequet_thm}
    Let $X_1, \dots, X_n$ be independent identically distributed random vectors indexed by an index set $\mc{T}$. Assume that $\E [X_{i, s}] = 0$, and $X_{i, s} \leq 1$ for all $s \in \mc{T}$. Let $Z = \sup_{s \in \mc{T}} \sum_{i = 1}^n X_{i, s}$, $\nu = 2 \E Z + \sigma^2$ where $\sigma^2 = \sup_{s \in \mc{T}} \sum_{i = 1}^n \E X_{i, s}^2$ is the wimpy variance. Let $\phi(u) = e^u - u - 1$ and $h(u) = (1 + u)\log (1 + u) - u$, for $u \geq -1$. Then for all $\lambda \geq 0$,
    \begin{equation*}
        \log \E e^{\lambda (Z - \E Z)} \leq \nu \phi(\lambda).
    \end{equation*}
    Also, for all $t \geq 0$,
    \begin{equation*}
        \P \lbrb{Z \geq \E Z + t} \leq e^{-\nu h(t / \nu)} \leq \exp \lprp{- \frac{t^2}{2 (\nu + t / 3)}}.
    \end{equation*}
\end{theorem}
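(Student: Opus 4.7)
The plan is to prove this via the entropy method (Herbst's argument) combined with a modified logarithmic Sobolev inequality for product measures. Let $L(\lambda) = \log \E e^{\lambda(Z - \E Z)}$; the goal is to show $L(\lambda) \leq \nu \phi(\lambda)$, after which the tail bound follows from Chernoff's inequality by optimizing over $\lambda$ at $\lambda = \log(1 + t/\nu)$ and then invoking the elementary inequality $h(u) \geq u^2/(2(1+u/3))$ for $u \geq 0$.

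First, I would invoke the tensorization of entropy: for any nonnegative function $f$ of independent $X_1,\dots,X_n$,
\begin{equation*}
\mathrm{Ent}[f] \;\leq\; \sum_{i=1}^n \E\bigl[\mathrm{Ent}_i[f]\bigr],
\end{equation*}
where $\mathrm{Ent}_i$ denotes conditional entropy given all variables except $X_i$. Applied to $f = e^{\lambda Z}$ with $\lambda \geq 0$, the left-hand side equals $\lambda \E[Ze^{\lambda Z}] - \E[e^{\lambda Z}]\log \E[e^{\lambda Z}]$, which in terms of $L$ yields the differential expression $\lambda L'(\lambda) - L(\lambda)$ (after dividing by $\E[e^{\lambda Z}]$ and normalizing by $\E Z$). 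The task reduces to controlling the right-hand side.

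Next, I would bound each conditional entropy using a variational/duality step: let $Z^{(i)}$ be the value of the supremum with the $i$-th summand removed, and note $Z - Z^{(i)} \in [0, X_{i,s^\star}] \subseteq [0,1]$ where $s^\star$ is the (data-dependent) maximizer. The key inequality of Bousquet, essentially a one-sided modified log-Sobolev bound, gives
\begin{equation*}
\mathrm{Ent}_i[e^{\lambda Z}] \;\leq\; \E_i\bigl[\psi(-\lambda(Z-Z^{(i)}))\, e^{\lambda Z}\bigr],
\end{equation*}
for $\psi(u) = e^u - u - 1$. Summing over $i$ and using the pointwise bound $\sum_i (Z - Z^{(i)})^2 \leq \sum_i X_{i,s^\star}^2$ together with the ``self-bounding'' observation $\sum_i(Z - Z^{(i)}) \leq Z$, one can relate the resulting sum to the wimpy variance plus a term proportional to $\E Z$; the crucial symmetrization/comparison lemma shows that $\E[\sum_i X_{i,s^\star}^2]$ is controlled by $\sigma^2 + 2\E Z$, which is precisely where the coefficient $\nu = 2\E Z + \sigma^2$ arises.

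Combining the pieces yields a differential inequality for $L$ of the form $\lambda L'(\lambda) - L(\lambda) \leq \nu(\lambda \phi'(\lambda) - \phi(\lambda))$, which, together with $L(0)=L'(0)=0$, integrates to $L(\lambda) \leq \nu \phi(\lambda)$ by considering $(L(\lambda)/\lambda)'$. The main obstacle is the self-bounding step: obtaining the exact constant $2$ in front of $\E Z$ (rather than a larger constant) requires the careful symmetrization and Ledoux-style comparison that is specific to suprema of centered empirical processes, and is what distinguishes Bousquet's inequality from cruder Bennett-type bounds that would only give $\nu = \sigma^2 + C\E Z$ with $C > 2$. Once the MGF bound is in hand, Chernoff's bound and the optimization at $\lambda^\star = \log(1 + t/\nu)$ directly yield $\P(Z \geq \E Z + t) \leq e^{-\nu h(t/\nu)}$, and the Bennett-to-Bernstein reduction $h(u) \geq u^2/(2+2u/3)$ completes the proof.
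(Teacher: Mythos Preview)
The paper does not prove this statement at all: it is quoted verbatim as an external result in the appendix on auxiliary technical tools, with citations to \cite{bousquetthesis,boucheron2013concentration}, and is simply invoked as a black box in the proofs of \cref{lem:spec_norm_bnd}, \cref{lem:quant_conc}, and \cref{lem:trunc_mean_bnd}. So there is nothing to compare against on the paper's side.

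Your sketch is essentially the standard entropy-method proof as presented in Boucheron--Lugosi--Massart (tensorization of entropy, the modified log-Sobolev/variational bound on $\mathrm{Ent}_i[e^{\lambda Z}]$, the self-bounding structure of the supremum, and Herbst's integration of the resulting differential inequality), and is a reasonable outline. One point to be careful about: the step where you claim $\E\bigl[\sum_i X_{i,s^\star}^2\bigr] \leq \sigma^2 + 2\E Z$ is stated as a ``symmetrization/comparison lemma,'' but in Bousquet's original argument this control is obtained in a slightly different way --- one bounds $\E\bigl[e^{\lambda Z}\sum_i X_{i,s^\star}^2\bigr]$ by writing $X_{i,s}^2 \leq X_{i,s} + \E[X_{i,s}^2]$ (using $X_{i,s} \leq 1$, so $X_{i,s}^2 - X_{i,s} \leq \E[X_{i,s}^2 - X_{i,s}] = \E X_{i,s}^2$ after centering), which directly yields the $\sigma^2 + 2\E Z$ (the factor $2$ coming from the exponential tilting, not from symmetrization). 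If you actually tried to get the sharp constant $2$ via symmetrization alone you would likely lose a factor; the algebraic trick $x^2 \leq x$ for $x \leq 1$ is what makes the constant exact. Otherwise the plan is sound.
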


We also recall McDiarmid's classical bounded differences inequality.

\begin{theorem}[\cite{mcdiarmid,boucheron2013concentration}]
    \label{thm:mcdiarmid}
    Let $n \in \N$, $\mc{X}$ denote some domain and assume $f: \mc{X}^n \to \R$ satisfies:
    \begin{equation*}
        \forall i \in [n]: \sup_{\substack{x_1, \dots, x_n \\ x^\prime_i \in \mc{X}}} \abs{f (x_1, \dots, x_n) - f (x_1, \dots, x_i^\prime, \dots, x_n)} \leq 1.
    \end{equation*}
    Then
    \begin{equation*}
        \Pr \lbrb{f(X) - \E f(X) \geq t} \leq e^{-2 t^2 / n}.
    \end{equation*}
\end{theorem}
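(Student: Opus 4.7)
The plan is to prove McDiarmid's inequality via the classical martingale (Azuma--Hoeffding) method. First, I would construct the Doob martingale $(M_i)_{i = 0}^n$ associated with the random variable $f(X) = f(X_1, \dots, X_n)$ by setting
\begin{equation*}
    M_i \coloneqq \E [f(X_1, \dots, X_n) \mid X_1, \dots, X_i],
\end{equation*}
so that $M_0 = \E f(X)$ and $M_n = f(X)$. This immediately yields the telescoping decomposition $f(X) - \E f(X) = \sum_{i = 1}^n D_i$, where $D_i \coloneqq M_i - M_{i - 1}$ is a martingale difference sequence with respect to the filtration generated by the $X_i$'s.

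Next, I would establish a conditional range bound for each $D_i$: conditional on $X_1, \dots, X_{i - 1}$, the random variable $D_i$ almost surely lies in an interval of length at most $1$. The key observation is that, by independence of the coordinates,
\begin{equation*}
    M_i = g_i (X_1, \dots, X_i), \quad M_{i - 1} = \E_{X_i'} [g_i (X_1, \dots, X_{i - 1}, X_i')],
\end{equation*}
where $g_i (x_1, \dots, x_i) \coloneqq \E [f (x_1, \dots, x_i, X_{i + 1}, \dots, X_n)]$. The bounded-differences hypothesis, integrated pointwise over the independent coordinates $X_{i + 1}, \dots, X_n$, gives $\sup_{x, x'} |g_i (X_1, \dots, X_{i - 1}, x) - g_i (X_1, \dots, X_{i - 1}, x')| \leq 1$, from which the conditional range bound on $D_i$ follows.

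With this in hand, I would apply Hoeffding's lemma conditionally on $X_1, \dots, X_{i - 1}$ to obtain $\E [e^{\lambda D_i} \mid X_1, \dots, X_{i - 1}] \leq e^{\lambda^2 / 8}$ for any $\lambda \geq 0$. Iterating this bound through the tower property of conditional expectation yields
\begin{equation*}
    \E [e^{\lambda (f(X) - \E f(X))}] \leq e^{n \lambda^2 / 8}.
\end{equation*}
The result then follows by Markov's inequality applied to the moment generating function, optimizing the resulting bound $e^{n \lambda^2 / 8 - \lambda t}$ over $\lambda \geq 0$ at $\lambda = 4t / n$ to obtain the stated tail estimate $e^{-2 t^2 / n}$. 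The only mildly technical step is verifying the conditional range bound on $D_i$; the rest is the standard Chernoff / MGF chaining that Hoeffding's lemma is designed for, so no significant obstacle is anticipated.
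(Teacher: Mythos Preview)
Your proposal is correct and is precisely the standard Doob-martingale/Azuma--Hoeffding proof of McDiarmid's inequality. The paper does not prove this statement at all: it is recorded in the appendix as a classical auxiliary result with citations to \cite{mcdiarmid,boucheron2013concentration}, so there is no in-paper argument to compare against.
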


We also require the Ledoux-Talagrand contraction inequality \cite{ledoux1991probability} (as stated in \cite{boucheron2013concentration}). 

\begin{theorem}[\cite{ledoux1991probability,boucheron2013concentration}]
    \label{thm:ledtal}
    Let $x_1, \dots, x_n$ be vectors whose real-valued components are indexed by $\mc{T}$, that is, $x_i = (x_{i, s})_{s \in \mc{T}}$. For each $i = 1, \dots, n$, let $\phi_i: \R \to \R$ be a $1$-Lipschitz function such that $\phi_i (0) = 0$. Let $\eps_1, \dots, \eps_n$ be independent Rademacher random variables, and let $\Psi: [0, \infty) \to \R$ be a non-decreasing convex function. Then,
    \begin{equation*}
        \E \lsrs{\Psi \lprp{\sup_{s \in \mc{T}} \sum_{i = 1}^n \eps_i \phi_i (x_{i, s})}} \leq \E \lsrs{\Psi \lprp{\sup_{s \in \mc{T}} \sum_{i = 1}^n \eps_i x_{i, s}}}
    \end{equation*}
    and 
    \begin{equation*}
        \E \lsrs{\Psi \lprp{\frac{1}{2} \sup_{s \in \mc{T}} \abs*{\sum_{i = 1}^n \eps_i \phi_i (x_{i, s})}}} \leq \E \lsrs{\Psi \lprp{\sup_{s \in \mc{T}} \abs*{\sum_{i = 1}^n \eps_i x_{i, s}}}}.
    \end{equation*}
\end{theorem}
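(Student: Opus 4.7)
The plan is to prove the first inequality by an iterated one-coordinate contraction, then deduce the absolute-value version from it via convexity of $\Psi$ together with the identity $\sup_s |Y_s| = \max(\sup_s Y_s, \sup_s(-Y_s))$. First I would reduce to a single-coordinate statement by conditioning on all $\eps_j$ with $j \neq i$. Writing $a_s := \sum_{j \neq i}\eps_j \phi_j(x_{j,s})$ and $b_s := x_{i,s}$, the goal becomes
\begin{equation*}
    \E_{\eps_i}\left[\Psi\left(\sup_{s}(a_s + \eps_i \phi(b_s))\right)\right] \leq \E_{\eps_i}\left[\Psi\left(\sup_{s}(a_s + \eps_i b_s)\right)\right]
\end{equation*}
for any $1$-Lipschitz $\phi$ with $\phi(0) = 0$. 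Applying this bound once for each coordinate (starting from all identities and replacing with $\phi_i$ one index at a time) gives the full theorem.

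Expanding the two-point average over $\eps_i$, the single-coordinate bound reduces to the deterministic inequality
\begin{equation*}
    \Psi(U^+) + \Psi(U^-) \leq \Psi(V^+) + \Psi(V^-),\ \ U^\pm := \sup_s(a_s \pm \phi(b_s)),\ \ V^\pm := \sup_s(a_s \pm b_s).
\end{equation*}
I would establish this by a \emph{majorization} argument: verify (i) $\max(U^+, U^-) \leq \max(V^+, V^-)$ and (ii) $U^+ + U^- \leq V^+ + V^-$, and then invoke the standard fact that, for any non-decreasing convex $\Psi$, these two dominations imply the $\Psi$-sum inequality (by expressing the smaller pair as a suitable convex combination of the larger). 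Bound (i) follows pointwise: since $|\phi(b_s)| \leq |b_s|$ by $\phi(0) = 0$ and $1$-Lipschitzness, $a_s \pm \phi(b_s) \leq a_s + |b_s| = \max(a_s + b_s, a_s - b_s) \leq \max(V^+, V^-)$. For (ii), pick maximizers $s_1, s_2$ of $U^+$ and $U^-$; if $b_{s_1} \geq b_{s_2}$ then $1$-Lipschitzness gives $\phi(b_{s_1}) - \phi(b_{s_2}) \leq b_{s_1} - b_{s_2}$, so $U^+ + U^- \leq (a_{s_1} + b_{s_1}) + (a_{s_2} - b_{s_2}) \leq V^+ + V^-$; the opposite case is symmetric.

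For the absolute-value version, I would write $\sup_s |Y_s| = \max(\sup_s Y_s, \sup_s (-Y_s))$ with $Y_s = \sum_i \eps_i \phi_i(x_{i,s})$ and use $\max(A, B) \leq A + B$ when both are nonnegative (arranged by adjoining the zero element to $\mc{T}$). Convexity and monotonicity of $\Psi$ then yield $\Psi(\tfrac{1}{2} \sup_s|Y_s|) \leq \tfrac{1}{2}[\Psi(\sup_s Y_s) + \Psi(\sup_s(-Y_s))]$. Applying the first inequality to both $\phi_i$ and $-\phi_i$ (each of which is $1$-Lipschitz and vanishes at $0$) bounds each expectation on the right by $\E \Psi(\sup_s |\sum_i \eps_i x_{i,s}|)$, completing the argument.

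The hard part will be the deterministic majorization step: verifying the two dominations requires the joint use of $\phi(0) = 0$ (for the max bound) and $1$-Lipschitzness (for the sum bound), and the passage from max- and sum-domination to the $\Psi$-sum inequality is where convexity of $\Psi$ is essential — none of the three hypotheses can be dropped.
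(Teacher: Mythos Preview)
The paper does not supply its own proof of this theorem: it is quoted as a known result from \cite{ledoux1991probability,boucheron2013concentration} in the appendix of auxiliary tools, so there is nothing in the paper to compare your argument against. Your proposal is the standard Ledoux--Talagrand proof (iterated one-coordinate contraction, reduced via the two-point average over $\eps_i$ to a deterministic majorization of the pair $(U^+,U^-)$ by $(V^+,V^-)$, followed by the convexity/monotonicity step), and the details you outline --- the pointwise bound $|\phi(b_s)|\le |b_s|$ for the max comparison, the Lipschitz bound on $\phi(b_{s_1})-\phi(b_{s_2})$ for the sum comparison, and the deduction of the absolute-value version by adjoining a zero index and applying the first inequality to both $\phi_i$ and $-\phi_i$ --- are all correct. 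One small point worth making explicit when you write it up: since the stated $\Psi$ has domain $[0,\infty)$ while the one-sided suprema $U^\pm,V^\pm$ may be negative, you should either extend $\Psi$ to $\R$ by $\Psi(t)=\Psi(0)$ for $t<0$ (which preserves monotonicity and convexity) or carry the zero-index adjunction through the first inequality as well; either convention is standard and does not affect the argument.
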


We will use the following simple corollary of the second conclusion in our proofs.

\begin{corollary}
    \label{cor:ledtal}
    Assume the setting of \cref{thm:ledtal}. Then,
    \begin{equation*}
        \E \lsrs{\sup_{s \in \mc{T}} \abs*{\sum_{i = 1}^n \eps_i \phi_i (x_{i, s})}} \leq 2 \E \lsrs{\sup_{s \in \mc{T}} \abs*{\sum_{i = 1}^n \eps_i x_{i, s}}}.
    \end{equation*}
\end{corollary}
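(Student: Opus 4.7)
The plan is to derive the corollary as a direct specialization of the second conclusion of \cref{thm:ledtal}. The only thing to check is that the identity function $\Psi(u) = u$ satisfies the hypotheses required by that theorem, and then to strip the factor of $1/2$ sitting inside the expectation on the left-hand side using linearity.

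Concretely, I would instantiate the second inequality of \cref{thm:ledtal} with the choice $\Psi: [0, \infty) \to \R$ given by $\Psi(u) = u$. This $\Psi$ is trivially non-decreasing and convex (in fact affine), so the hypotheses of \cref{thm:ledtal} are met, and the contractions $\phi_i$ and vectors $x_i$ remain exactly as in the statement of the corollary. Applying the theorem with this $\Psi$ yields
\begin{equation*}
    \E \lsrs{\frac{1}{2} \sup_{s \in \mc{T}} \abs*{\sum_{i = 1}^n \eps_i \phi_i (x_{i, s})}} \leq \E \lsrs{\sup_{s \in \mc{T}} \abs*{\sum_{i = 1}^n \eps_i x_{i, s}}}.
\end{equation*}

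By linearity of expectation, the constant $1/2$ can be pulled out of the expectation on the left, and multiplying both sides by $2$ gives exactly the claimed inequality. There is no real obstacle here — the entire content of the corollary is a repackaging of \cref{thm:ledtal} for the specific convex $\Psi(u) = u$, chosen precisely to turn the ``$\Psi$ of a scaled supremum'' form into the more convenient ``expected supremum'' form that will be used downstream in the proofs of \cref{lem:spec_norm_bnd,lem:quant_conc}.
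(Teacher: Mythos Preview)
Your proposal is correct and matches the paper's intended derivation: the paper does not give a written proof but simply states that \cref{cor:ledtal} is a ``simple corollary of the second conclusion'' of \cref{thm:ledtal}, and the instantiation $\Psi(u)=u$ followed by pulling out the factor $1/2$ is exactly that derivation.
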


\begin{theorem}[\cite{massartTightConstantDvoretzkyKieferWolfowitz1990}]\label{thm:DKW}
    Let $X_1,\dots,X_n$ be real-valued, iid random variables with CDF $F$. Let $F_n$ denote the empirical distribution function: $$F_n(x)=\frac{1}{n}\sum_{i=1}^n\mathbb{1}[X_i\leq x]$$
    For any $\epsilon > 0$: $$\P\lbrb{\sup_{x \in \R}|F_n(x)-F(x)|>\epsilon}\leq 2e^{-2n\epsilon^2}$$
\end{theorem}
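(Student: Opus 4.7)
The plan is to establish the tight two-sided DKW bound by reducing to a one-sided supremum via symmetry and then invoking a reflection argument on the associated binomial process. The factor of $2$ in front of the exponent will come naturally from a union bound over the two one-sided deviations.

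First, by the probability integral transform I may assume without loss of generality that $X_i \sim \mrm{Unif}[0,1]$ and $F(x)=x$, since $\sup_x|F_n(x)-F(x)|$ is invariant under any monotone reparameterization of $\R$. Define $Z_{+} \coloneqq \sup_x(F_n(x)-F(x))$ and $Z_{-} \coloneqq \sup_x(F(x)-F_n(x))$. A union bound gives
$$\P\lbrb{\sup_x|F_n(x)-F(x)|>\epsilon} \leq \P\lbrb{Z_{+}>\epsilon} + \P\lbrb{Z_{-}>\epsilon},$$
and the distributional symmetry $X_i \mapsto 1-X_i$ interchanges the two terms, so it suffices to prove $\P(Z_{+}>\epsilon)\leq e^{-2n\epsilon^2}$.

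For the one-sided bound, at each fixed threshold $x$ we have $nF_n(x)\sim\mrm{Bin}(n,x)$, and the standard Chernoff bound yields $\P(F_n(x)-x>\epsilon)\leq e^{-2n\epsilon^2}$ pointwise. The difficulty is extending this pointwise control to a uniform-in-$x$ statement without loss of constant. The natural bounded-differences route via \cref{thm:mcdiarmid} --- using that a single-sample perturbation moves $Z_{+}$ by at most $1/n$ --- gives concentration of $Z_{+}$ around its mean at rate $e^{-2nt^2}$, but this would absorb an additive $\E[Z_{+}]=\Theta(1/\sqrt n)$ correction that spoils the sharp constant. I would instead follow Massart's reflection argument: conditioning on the order statistics makes $\{nF_n(x)-nx\}$ a reverse-time martingale, and a combinatorial reflection identity on binomial sample paths collapses the supremum onto a single boundary-crossing event at the extremal threshold, recovering exactly $e^{-2n\epsilon^2}$.

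The main obstacle is the Massart-optimal constant $2$ in the exponent. A softer symmetrization-plus-concentration argument --- applying \cref{thm:bousequet_thm} to the empirical process indexed by the half-line class (which has VC dimension $1$), together with \cref{cor:ledtal} to absorb the indicators --- would yield $2e^{-cn\epsilon^2}$ for some universal $c<2$, which is already sufficient for the downstream quantile-based arguments in the paper. Achieving the tight constant $2$, however, genuinely requires the Bretagnolle--Massart combinatorial identity on binomial extrema rather than any black-box concentration inequality available in the appendix's toolkit, which is why the authors simply cite Massart's original paper.
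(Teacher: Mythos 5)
The paper does not prove this statement; it cites Massart's 1990 paper and uses the bound as a black box (for \cref{lem:quant_conc} and the quantile machinery in \cref{sec:sub_g}), so there is no internal proof to compare against, and you correctly observe this at the end of your proposal. Your sketch of the standard reduction is sound: the probability-integral-transform reduction to the uniform case, the $X_i \mapsto 1 - X_i$ symmetry identifying $Z_+$ and $Z_-$ in law, and the union bound yielding the factor $2$ are all correct, as is the pointwise Hoeffding bound $\P(F_n(x) - x > \epsilon) \leq e^{-2n\epsilon^2}$ at each fixed $x$. You also correctly diagnose why the toolkit in the appendix (\cref{thm:mcdiarmid}, \cref{thm:bousequet_thm}, \cref{cor:ledtal}) cannot recover the sharp constant $2$: bounded-differences concentrates $Z_+$ around $\E[Z_+] = \Theta(n^{-1/2})$, and the additive mean spoils the exponent, while Bousquet/Ledoux--Talagrand lose constants through symmetrization.

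The one genuine gap is that the passage from the pointwise bound to $\P(Z_+ > \epsilon) \leq e^{-2n\epsilon^2}$ is not carried out; your description of a ``reverse-time martingale plus reflection identity'' gestures at the right intuition (boundary crossing of a tied-down process) but does not constitute a proof, and Massart's actual argument is a careful analytic/inductive treatment of the exact distribution of $D_n^+ = \sup_x(F_n(x) - F(x))$ via the Smirnov/Birnbaum--Tingey formula rather than a clean reflection as for a random walk. One further small point worth flagging: Massart's one-sided bound $\P(D_n^+ > \epsilon) \leq e^{-2n\epsilon^2}$ is only proved for $\epsilon \geq \sqrt{\ln 2/(2n)}$; below that threshold the two-sided bound $2e^{-2n\epsilon^2} \geq 1$ is trivially true, so the theorem as stated holds unconditionally, but a complete writeup should note this regime split. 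Since you explicitly defer to the cited reference for the core combinatorial step, your proposal is best read as an accurate account of the proof architecture and of why the paper cites rather than reproves it, not as a self-contained proof.
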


\begin{theorem}[\cite{ash1990information} Lemma 4.7.2]\label{thm:anticoncentration}
    Let $S\sim B(n,p)$ be the sum of $n$ independent $p$-biased coins. Then: $$\P\lbrb{S\geq \lambda n}\geq\frac{1}{\sqrt{8\lambda n(1-\lambda)}}e^{-nD\lprp{\lambda||p}}$$
    where $D(\lambda||p)=\lambda\log\frac{\lambda}{p} + (1-\lambda)\log\frac{1-\lambda}{1-p}$
\end{theorem}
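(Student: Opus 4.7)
The plan is to lower bound the upper tail by its single largest term and then estimate that term sharply via Stirling's formula. Concretely, setting $k^{\star} := \lceil \lambda n \rceil$, the trivial bound
\[
\P[S \geq \lambda n] \;\geq\; \P[S = k^{\star}] \;=\; \binom{n}{k^{\star}}\, p^{k^{\star}}(1-p)^{n - k^{\star}}
\]
reduces the problem to a sharp pointwise estimate of the binomial PMF at $k \approx \lambda n$. I would first carry out the calculation under the clean assumption $\lambda n \in \mathbb{N}$ (so $k^{\star} = \lambda n$); the general case follows because rounding $k^{\star}$ perturbs $k^{\star}/n$ by only $O(1/n)$, which shifts $n D(\cdot \Vert p)$ by a bounded additive constant (since $D(\cdot \Vert p)$ is Lipschitz on any compact subinterval of $(0,1)$) and multiplies the polynomial prefactor by a bounded factor, both of which are absorbed by the slack between the constants below.

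Under the integrality assumption, the probabilistic factor collapses by the definition of KL divergence,
\[
p^{\lambda n}(1-p)^{(1-\lambda)n} \;=\; \lambda^{\lambda n}(1-\lambda)^{(1-\lambda)n}\,\exp(-n D(\lambda \Vert p)),
\]
extracting the full exponential rate. For the combinatorial factor I would apply the two-sided Stirling bound $\sqrt{2\pi m}(m/e)^{m} \leq m! \leq \sqrt{2\pi m}(m/e)^{m}\,e^{1/(12m)}$ to $n!$ in the numerator and to $(\lambda n)!$ and $((1-\lambda)n)!$ in the denominator. The $(\cdot/e)^{\cdot}$ terms telescope cleanly, yielding
\[
\binom{n}{\lambda n} \;\geq\; \frac{\lambda^{-\lambda n}(1-\lambda)^{-(1-\lambda)n}}{\sqrt{2\pi\,\lambda n(1-\lambda)}}\,\exp\!\left(-\tfrac{1}{12\lambda n} - \tfrac{1}{12(1-\lambda)n}\right).
\]
Multiplying this with the previous display, the $\lambda^{\lambda n}(1-\lambda)^{(1-\lambda)n}$ factors cancel and one is left with the target bound up to an $e^{-O(1/n)}$ residual.

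The main remaining task is constant-bookkeeping: rewriting the prefactor in the slightly looser form $1/\sqrt{8\lambda n(1-\lambda)}$ asserted in the theorem. Since $\sqrt{8}/\sqrt{2\pi} > 1.12$, there is comfortable slack to absorb both the Stirling residual $e^{-O(1/n)}$ and the rounding loss from integerizing $\lambda n$, at least when $\lambda n(1-\lambda)$ is bounded below by an absolute constant. The one mildly delicate case, and the principal obstacle, is the boundary regime where $\min(\lambda n, (1-\lambda) n)$ is a very small integer: here the $e^{1/(12m)}$ correction is not negligible, but a direct verification of the inequality at the boundary values $k^{\star} \in \{0, 1, n-1, n\}$ suffices, after which all remaining values of $k^{\star}$ lie safely in the regime where the constant-shaving above goes through.
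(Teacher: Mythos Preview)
The paper does not supply its own proof of this statement: it is quoted verbatim as Lemma~4.7.2 of Ash's \emph{Information Theory} and used as a black box (only the corollary immediately following it is proved in the paper). There is therefore no in-paper argument to compare against.

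Your proposed route---lower-bound the tail by the single term $\P[S=k^\star]$ and estimate $\binom{n}{k^\star}$ via two-sided Stirling so that the entropy factor cancels against $p^{k^\star}(1-p)^{n-k^\star}$---is precisely the classical argument behind this inequality (and is essentially what the cited reference does). The only genuine care point is the one you already flag: the $e^{-1/(12m)}$ Stirling residual is not negligible when $\min(k^\star,\,n-k^\star)$ is $1$ or $2$, so the constant comparison $\sqrt{8}/\sqrt{2\pi}\approx 1.128$ is just barely not enough there and those few boundary values must be checked by hand. With that caveat, the plan is complete and correct.
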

\begin{corollary}\label{cor:anticoncentration}
    Let $S\sim B(n,p)$ be the sum of $n$ independent $p$-biased coins. Then: $$\P\lbrb{\frac{S}{n}\geq p + \epsilon}\geq\frac{1}{\sqrt{2n}}e^{-\frac{n\epsilon^2}{p(1-p)}}$$
\end{corollary}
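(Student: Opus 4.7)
The plan is to derive \cref{cor:anticoncentration} as a direct specialization of \cref{thm:anticoncentration} by choosing $\lambda = p + \epsilon$, and then simplifying the two $\lambda$-dependent factors (the polynomial prefactor and the KL-divergence exponent) into the cleaner $p(1-p)$ form required by the statement. Once this choice is made, there is nothing probabilistic left to do; everything reduces to elementary calculus bounds on scalar functions.

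First, I would bound the prefactor. Since $\lambda(1-\lambda) \leq 1/4$ for every $\lambda \in [0,1]$, we have $\sqrt{8\lambda n (1-\lambda)} \leq \sqrt{2n}$, giving
\begin{equation*}
\frac{1}{\sqrt{8\lambda n (1-\lambda)}} \;\geq\; \frac{1}{\sqrt{2n}}.
\end{equation*}
This already matches the prefactor in \cref{cor:anticoncentration}, so the only remaining task is to show $D(p+\epsilon \,\|\, p) \leq \epsilon^2 / (p(1-p))$.

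Second, to handle the KL-divergence, I would apply the pointwise inequalities $\log(1+x) \leq x$ and $\log(1-x) \leq -x$ (valid for $x > -1$ and $x < 1$ respectively) separately to the two logarithms in $D(p+\epsilon \,\|\, p) = (p+\epsilon)\log\bigl(1 + \tfrac{\epsilon}{p}\bigr) + (1-p-\epsilon)\log\bigl(1 - \tfrac{\epsilon}{1-p}\bigr)$. The linear-in-$\epsilon$ terms coming from the two estimates are $+\epsilon$ and $-\epsilon$ and cancel, leaving only the quadratic remainders $\epsilon^2/p + \epsilon^2/(1-p) = \epsilon^2 / (p(1-p))$. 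Combining this with the prefactor bound and substituting into \cref{thm:anticoncentration} yields the corollary.

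There is no real obstacle here; the only thing to be a little careful about is the implicit assumption that $\lambda = p + \epsilon \leq 1$ (otherwise the probability on the left is zero and the claim is vacuous, so one may as well assume $\epsilon \leq 1-p$), so that the two $\log$ inequalities are applied in their valid ranges. After that the computation is essentially a one-liner.
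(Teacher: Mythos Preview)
Your proposal is correct and essentially identical to the paper's proof: both bound the prefactor via $\lambda(1-\lambda)\le 1/4$ and bound $D(p+\epsilon\,\|\,p)$ by applying the elementary inequality $\log x \le x-1$ (your $\log(1+x)\le x$ is the same thing) to each term, after which the linear pieces cancel and one is left with $\epsilon^2/p + \epsilon^2/(1-p) = \epsilon^2/(p(1-p))$.
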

\begin{proof}
    We first observe that $\lambda(1-\lambda)\geq \frac{1}{4}$. Therefore, $$\frac{1}{\sqrt{8\lambda n(1-\lambda)}}\geq\frac{1}{\sqrt{2n}}$$
    Next, we show that $D(\lambda||p)\leq \epsilon^2$ where $\lambda=p+\epsilon$
    \begin{align*}
        \lambda\log\frac{\lambda}{p} + (1-\lambda)\log\frac{1-\lambda}{1-p}
        &\leq \lambda\lprp{\frac{\lambda}{p}-1} + (1-\lambda)\lprp{\frac{1-\lambda}{1-p}-1}\\
        &= \frac{\lambda(\lambda-p)}{p}+\frac{(1-\lambda)(p-\lambda)}{1-p}\\
        &= \frac{\lambda(\lambda-p)(1-p) + (1-\lambda)(p-\lambda)p}{p(1-p)}\\
        &= \frac{\epsilon\lambda(1-p)-\epsilon p(1-\lambda)}{p(1-p)}\\
        &= \frac{\epsilon^2}{p(1-p)}\\
    \end{align*}
\end{proof}

\subsection{Tools from Robust Statistics}

\begin{lemma}
    \label{lem:tv_wr_bnd}
    For any $\rho \in [0, 1/2]$ and for any $w, w' \in \mc{W}_\rho$, we have:
    \begin{equation*}
        \tv (w, w') \leq 2 \rho.
    \end{equation*}
\end{lemma}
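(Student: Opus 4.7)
The plan is to reduce the problem to bounding $\tv(w, u)$ for a single $w \in \mc{W}_\rho$, where $u$ denotes the uniform distribution on $[n]$ (i.e., $u_i = 1/n$). Then the triangle inequality for $\tv$ immediately gives
\begin{equation*}
    \tv(w, w') \leq \tv(w, u) + \tv(u, w'),
\end{equation*}
so it suffices to show $\tv(w, u) \leq \rho$ for every $w \in \mc{W}_\rho$, which I would do directly from the coordinate-wise cap in the definition of $\mc{W}_\rho$.

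The key structural observation is that $w_i \leq \frac{1}{(1-\rho)n}$ is equivalent to $(1-\rho) w_i \leq \frac{1}{n} = u_i$ for every $i$. Consequently, setting $\tilde{w}_i := u_i - (1-\rho) w_i$ defines a nonnegative vector whose entries sum to $1 - (1-\rho) = \rho$, yielding the decomposition $u = (1-\rho) w + \tilde{w}$. Rearranging gives $w - u = \rho w - \tilde{w}$, and the $\ell_1$ triangle inequality then yields
\begin{equation*}
    \|w - u\|_1 \leq \rho \|w\|_1 + \|\tilde{w}\|_1 = \rho + \rho = 2\rho,
\end{equation*}
so $\tv(w, u) = \tfrac{1}{2}\|w - u\|_1 \leq \rho$. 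Combining this bound for both $w$ and $w'$ via the triangle inequality completes the proof with the advertised constant $2\rho$.

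There is no substantive obstacle here; the only thing to double-check is that $\tilde{w}$ is genuinely nonnegative, which is exactly what the cap $w_i \leq \frac{1}{(1-\rho)n}$ provides, and that the assumption $\rho \leq 1/2$ is only needed implicitly to ensure $(1-\rho) \in [\tfrac{1}{2}, 1]$ so that the decomposition is non-degenerate. The argument is essentially a one-line consequence of rewriting the definition of $\mc{W}_\rho$ as a statement that $(1-\rho) w$ is coordinate-wise dominated by the uniform measure.
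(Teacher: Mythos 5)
Your proof is correct, but it takes a different route from the paper's. The paper argues directly: it uses the identity $\tv(w, w') = \sum_{i=1}^n \max(w_i, w'_i) - 1$, then bounds $\max(w_i, w'_i) \leq \frac{1}{(1-\rho)n}$ coordinatewise to obtain $\tv(w, w') \leq \frac{1}{1-\rho} - 1 = \frac{\rho}{1-\rho}$, and finally invokes $\rho \leq 1/2$ to conclude $\frac{\rho}{1-\rho} \leq 2\rho$. Your approach instead centers at the uniform distribution $u$, observes the cap is equivalent to $u \geq (1-\rho)w$ pointwise, and writes $u = (1-\rho)w + \tilde w$ with $\tilde w \geq 0$ and $\|\tilde w\|_1 = \rho$, then applies the $\ell_1$ triangle inequality twice. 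Both are fine. The paper's argument is slightly sharper in the permitted range (it yields the intermediate bound $\frac{\rho}{1-\rho} \leq 2\rho$), whereas yours hits $2\rho$ directly. On the other hand, your argument does not actually use the hypothesis $\rho \leq 1/2$ anywhere — your closing remark that it is needed to keep the decomposition non-degenerate is not accurate, since $\tilde w$ is nonnegative and sums to $\rho$ for all $\rho \in [0,1)$. Conceptually, the detour through $u$ makes visible that every $w \in \mc{W}_\rho$ is individually $\rho$-close to uniform, which is a cleaner structural statement than the paper's pairwise bound, at the cost of a factor-of-two slack from the triangle inequality.
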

\begin{proof}
    Observe that:
    \begin{equation*}
        \tv(w, w') = \sum_{i = 1}^n \max (w_i, w'_i) - 1 \leq \frac{1}{1 - \rho} - 1 = \frac{\rho}{1 - \rho} \leq 2\rho
    \end{equation*}
    and the lemma follows.
\end{proof}

\begin{lemma}
    \label{lem:tv_var_mn_bnd}
    Let $\bm{Y} = \{y_1, \dots, y_n\} \subset \R$ and $\rho \in [0, 1/4]$. Now, let $w, w' \in \mc{W}_\rho$ be such that:
    \begin{equation*}
        \sum_{i = 1}^n w_i (y_i - \mu)^2 \leq \sigma^2,\ \sum_{i = 1}^n w_i (y_i - \mu')^2 \leq \sigma'^2 \text{ where } \mu = \sum_{i = 1}^n w_i y_i,\ \mu' = \sum_{i = 1}^n w'_i y_i.
    \end{equation*}
    Then, we have:
    \begin{equation*}
        \abs{\mu - \mu'} \leq 2 \sqrt{\rho} (\sigma + \sigma').
    \end{equation*}
\end{lemma}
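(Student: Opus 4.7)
The plan is to translate the total-variation proximity of $w$ and $w'$, supplied by \cref{lem:tv_wr_bnd}, into a bound on $|\mu - \mu'|$ via a coupling-plus-Cauchy--Schwarz argument. Setting $\alpha := \tv(w, w')$, I would introduce the pointwise overlap $q_i := \min(w_i, w_i')$, so that $\sum_i q_i = 1 - \alpha$, the residuals $w_i - q_i, w_i' - q_i \geq 0$ each sum to $\alpha$, and are bounded above by $w_i$ and $w_i'$ respectively. Starting from $\mu - \mu' = \sum_i (w_i - w_i') y_i$, inserting $\pm\sum_i q_i y_i$, and then centering each of the two resulting residual sums by its \emph{own} mean, one lands on the key identity
\begin{equation*}
(1 - \alpha)(\mu - \mu') \;=\; \sum_i (w_i - q_i)(y_i - \mu) \;-\; \sum_i (w_i' - q_i)(y_i - \mu').
\end{equation*}
The point of this rewriting is that each of the two sums on the right is centered at the mean of the measure whose second-moment bound is hypothesized, which is what lets the variance information enter.

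Next I would bound each term on the right by Cauchy--Schwarz. For the first,
\begin{equation*}
\lprp{\sum_i (w_i - q_i)(y_i - \mu)}^{\!2} \;\leq\; \lprp{\sum_i (w_i - q_i)} \cdot \lprp{\sum_i (w_i - q_i)(y_i - \mu)^2} \;\leq\; \alpha \sigma^2,
\end{equation*}
where the last step uses $0 \leq w_i - q_i \leq w_i$ together with $\sum_i w_i (y_i - \mu)^2 \leq \sigma^2$. The symmetric manipulation yields $\alpha \sigma'^2$ for the second term, and the triangle inequality then gives
\begin{equation*}
(1 - \alpha)\,|\mu - \mu'| \;\leq\; \sqrt{\alpha}\,(\sigma + \sigma').
\end{equation*}

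Finally, I would substitute the TV bound. \cref{lem:tv_wr_bnd} (or rather the tighter intermediate estimate $\alpha \leq \rho/(1-\rho)$ appearing in its proof) reduces the claim to the scalar inequality $\sqrt{\alpha}/(1-\alpha) \leq 2\sqrt{\rho}$ on the range $\rho \in [0, 1/4]$, which is verified by elementary algebra. I expect the only real (and minor) obstacle to be this final numerical check: using the crude bound $\alpha \leq 2\rho$ yields $|\mu - \mu'| \leq (\sqrt{2\rho}/(1-2\rho))(\sigma+\sigma')$, which slightly exceeds $2\sqrt{\rho}(\sigma+\sigma')$ near $\rho = 1/4$, so the argument must lean on the sharper form of the TV estimate from the proof of \cref{lem:tv_wr_bnd}. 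Every other step is essentially forced by the structure of the hypotheses.
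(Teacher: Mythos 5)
Your proof is correct, but it takes a different route from the paper's. You work directly with the identity $(1-\alpha)(\mu - \mu') = \sum_i (w_i - q_i)(y_i - \mu) - \sum_i (w'_i - q_i)(y_i - \mu')$ and finish with Cauchy--Schwarz on each residual sum; the paper instead normalizes the overlap $q$ and the residuals into probability vectors $\wt{w}, \wh{w}, \wh{w}'$, writes $w = (1-\nu)\wt{w} + \nu\wh{w}$ and $w' = (1-\nu)\wt{w} + \nu\wh{w}'$, and applies the law of total variance to get $\nu(1-\nu)(\wt{\mu} - \wh{\mu})^2 \leq \sigma^2$, from which $|\mu - \wt{\mu}| = \nu|\wh{\mu} - \wt{\mu}| \leq \sqrt{2\nu}\,\sigma$, and likewise for $\mu'$; the triangle inequality then gives $|\mu - \mu'| \leq \sqrt{2\nu}(\sigma + \sigma') \leq 2\sqrt{\rho}(\sigma + \sigma')$. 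Your version is more elementary (no mixture/variance-decomposition machinery), but, as you correctly noticed, it pushes the $(1-\alpha)$ factor onto the left-hand side and therefore needs the sharper intermediate TV estimate $\alpha \leq \rho/(1-\rho)$ together with the numerical check $16\rho^2 - 15\rho + 3 \geq 0$ on $[0,1/4]$; the paper's mixture argument absorbs the analogue of that factor (the $1/\sqrt{1-\nu} \leq \sqrt{2}$) upstream, so the crude bound $\nu \leq 2\rho$ suffices and the constant $2$ falls out cleanly as $\sqrt{2}\cdot\sqrt{2}$. Both routes are valid and yield the stated constant.
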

\begin{proof}
    Note that we may assume $w \neq w'$ as this case is trivial. Let $\wt{w}$ be the distribution be such that $\wt{w}_i \propto \min (w_i, w'_i)$. Furthermore, let $\wh{w}$ and $\wh{w}'$ distributions such that $\wh{w}_i \propto w_i - \min (w_i, w'_i)$ and $\wh{w}'_i \propto w'_i - \min (w_i, w'_i)$. Note that $\wt{w}$ is well defined as $\tv (w, w') \leq 1/2$ from \cref{lem:tv_wr_bnd} and $\wh{w}, \wh{w}'$ are well-defined from the fact that $\tv (w, w') > 0$. Letting $\nu = \tv (w, w')$:
    \begin{equation*}
        w = (1 - \nu) \wt{w} + \nu \wh{w},\ w' = (1 - \nu) \wt{w} + \nu \wh{w}.
    \end{equation*}
    Letting $\wt{\mu}, \wh{\mu}, \wh{\mu}'$ be the means of $\wt{w}$, $\wh{w}$, and $\wh{w}'$ respectively, we have:
    \begin{gather*}
        \abs{\mu - \wt{\mu}} = \nu \abs{\wh{\mu} - \wt{\mu}} \\
        \abs{\mu' - \wt{\mu}} = \nu \abs{\wh{\mu}' - \wt{\mu}}.
    \end{gather*}
    As a consequence, by the law of total variation:
    \begin{gather*}
        \nu (\wh{\mu} - \mu)^2 + (1 - \nu) (\wt{\mu} - \mu)^2 = \nu (1 - \nu) (\wt{\mu} - \wh{\mu})^2 \leq \sigma^2 \\
        \nu (\wh{\mu}' - \mu')^2 + (1 - \nu) (\wt{\mu} - \mu')^2 = \nu (1 - \nu) (\wt{\mu} - \wh{\mu}')^2 \leq \sigma'^2.
    \end{gather*}
    Therefore, we get:
    \begin{gather*}
        \abs{\wt{\mu} - \wh{\mu}} \leq \frac{\sqrt{2}\sigma}{\sqrt{\nu}} \implies \abs{\mu - \wt{\mu}} \leq \sqrt{2 \nu} \sigma\\
        \abs{\wt{\mu} - \wh{\mu}'} \leq \frac{\sqrt{2}\sigma'}{\sqrt{\nu}} \implies \abs{\mu' - \wt{\mu}} \leq \sqrt{2 \nu} \sigma'.
    \end{gather*}
    By the triangle inequality, we get:
    \begin{equation*}
        \abs{\mu - \mu'} \leq 2 \sqrt{\rho} (\sigma + \sigma')
    \end{equation*}
    concluding the proof of the lemma.
\end{proof}

\section{Deferred Proofs from \cref{sec:sub_g}}
\label{sec:deferred_proofs}

The deferred proofs of technical results utilized in the proof of \cref{thm:sub_g_main} in \cref{sec:sub_g} are collected here. The first is the proof of \cref{lem:trunc_mean_bnd}.
\truncmeanbnd*
\begin{proof}
    We proceed as follows. Consider the random variable:
    \begin{equation*}
        Z \coloneqq \max_{\norm{v} = 1} W_{i, v} \text{ where } W_{i, v} \coloneqq \sum_{i = 1}^n \phi_\tau (\inp{X_i}{v}) - \E_{X \thicksim \mc{D}} [\phi_\tau (\inp{X}{v})].
    \end{equation*}
    We have where $X'_i$ and $\gamma_i$ are independent copies of $X_i$ and independent Rademacher random variables respectively:
    \begin{align*}
        \E [Z] &= \E \lsrs{\max_{\norm{v} = 1} \sum_{i = 1}^n \phi_\tau (\inp{X_i}{v}) - \E [\phi_\tau (\inp{X_i'}{v})]} \\
        &\leq \E_{X_i, X_i'} \lsrs{\max_{\norm{v} = 1} \sum_{i = 1}^n \phi_\tau (\inp{X_i}{v}) - \phi_\tau (\inp{X_i}{v})} \\
        &= \E_{X_i, X_i', \gamma_i} \lsrs{\max_{\norm{v} = 1} \sum_{i = 1}^n \gamma_i(\phi_\tau (\inp{X_i}{v}) - \phi_\tau (\inp{X_i}{v}))} \\
        &\leq 2 \E_{X_i, \gamma_i} \lsrs{\max_{\norm{v} = 1} \sum_{i = 1}^n \gamma_i\phi_\tau (\inp{X_i}{v})} \\
        &\leq 4 \E_{X_i, \gamma_i} \lsrs{\max_{\norm{v} = 1} \sum_{i = 1}^n \gamma_i \inp{X_i}{v}} \leq 4 \E_{X_i, \gamma_i} \lsrs{\norm*{\sum_{i = 1}^n \gamma_i X_i}} \\
        &\leq 4 \sqrt{\E_{X_i, \gamma_i} \lsrs{\norm*{\sum_{i = 1}^n \gamma_i X_i}^2}} = 4 \sqrt{n \Tr (\Sigma)},
    \end{align*}
    the third inequality following from Ledoux-Talagrand contraction (\cref{cor:ledtal}). Furthermore, note that for all $\norm{v} = 1$:
    \begin{equation*}
        \E [W_{i, v}^2] \leq \E [\phi_\tau (\inp{X_i}{v})^2] \leq \E [\inp{X_i}{v}^2] \leq 1
    \end{equation*}
    and that $W_{i, v} \leq 2\tau$ almost surely. Therefore, we get from \cref{thm:bousequet_thm} that:
    \begin{equation*}
        \P \lbrb{Z \geq r} \leq \exp \lprp{- \frac{r^2}{64} \cdot \frac{1}{\tau \sqrt{n \Tr (\Sigma)} + n + r\tau}}.
    \end{equation*}
    By picking $r = C n\trunc \tau$, we get that:
    \begin{equation*}
        \P \lbrb{Z \geq r} \leq \exp (- n\trunc).
    \end{equation*}
    By observing that for all $\norm{v} = 1$ and $X \thicksim \mc{D}$,
    \begin{align*}
        \abs{\E [\phi_\tau (\inp{X}{v})]} &= \abs{\E [\inp{X}{v}] - \E [(\inp{X}{v} - \sgn (\inp{X}{v}) \tau) \bm{1} \lbrb{\abs{\inp{X}{v}} \geq \tau}]} \\
        &= \abs{\E [(\inp{X}{v} - \sgn (\inp{X}{v}) \tau) \bm{1} \lbrb{\abs{\inp{X}{v}} \geq \tau}]} \\
        &\leq \abs{\E [\inp{X}{v} \bm{1} \lbrb{\abs{\inp{X}{v}} \geq \tau}]} + \tau \P \lbrb{\abs{\inp{X}{v}} \geq \tau} \\
        &\leq \sqrt{\E [\inp{X}{v}^2]} \sqrt{\P \lbrb{\abs{\inp{X}{v}} \geq \tau}} + \tau \cdot \frac{1}{\tau^2} \leq \frac{2}{\tau} \leq C \sqrt{\trunc}
    \end{align*}
    the lemma follows.
\end{proof}

Next, we present the proof of \cref{lem:spec_norm_bnd}.
\specnrmbnd*
\begin{proof}
    We may assume without loss of generality that $\norm{\Sigma} = 1$, First, observe that:
    \begin{equation*}
        \forall \norm{v} = 1 : \E_{X \thicksim \mc{D}} [\phi_\tau (\inp{X_i}{v})^2] \leq \E_{X \thicksim \mc{D}} [\inp{X_i}{v}^2] \leq 1.
    \end{equation*}
    And, as a consequence, we get where $X'_i$ and $\gamma_i$ are independent copies of $X_i$ and independent Rademacher random variables respectively:
    \begin{align*}
        \E \lsrs{\max_{\norm{v} = 1} \sum_{i = 1}^n \phi_\tau (\inp{X_i}{v})^2} &\leq \E \lsrs{\max_{\norm{v} = 1} \sum_{i = 1}^n \phi_\tau (\inp{X_i}{v})^2 - \E \lsrs{\phi_\tau (\inp{X_i'}{v})^2}} + \max_{\norm{v} = 1} n \E \lsrs{\phi_\tau (\inp{X}{v})^2} \\
        &\leq n + \E \lsrs{\max_{\norm{v} = 1} \sum_{i = 1}^n \phi_\tau (\inp{X_i}{v})^2 - \E \lsrs{\phi_\tau (\inp{X_i'}{v})^2}} \\
        &\leq n + \E_{X_i, X_i'} \lsrs{\max_{\norm{v} = 1} \sum_{i = 1}^n \phi_\tau (\inp{X_i}{v})^2 - \phi_\tau (\inp{X_i'}{v})^2} \\
        &\leq n + \E_{X_i, X_i', \gamma_i} \lsrs{\max_{\norm{v} = 1} \sum_{i = 1}^n \gamma_i (\phi_\tau (\inp{X_i}{v})^2 - \phi_\tau (\inp{X_i'}{v})^2)} \\
        &\leq n + 2 \E_{X_i, \gamma_i} \lsrs{\max_{\norm{v} = 1} \sum_{i = 1}^n \gamma_i \phi_\tau (\inp{X_i}{v})^2} \\
        &\leq n + 8 \tau \E_{X_i, \gamma_i} \lsrs{\max_{\norm{v} = 1} \sum_{i = 1}^n \gamma_i \inp{X_i}{v}} \leq n + 8 \tau \E_{X_i, \gamma_i} \lsrs{\norm{\sum_{i = 1}^n \gamma_i X_i}} \\
        &\leq n + 8 \tau \sqrt{\E_{X_i, \gamma_i} \lsrs{\norm*{\sum_{i = 1}^n \gamma_i X_i}^2}} = n + 8 \tau \sqrt{n \Tr (\Sigma)},
        \numberthis \label{eq:spec_bn}
    \end{align*}
    the sixth inequality following from Ledoux-Talagrand contraction (\cref{cor:ledtal}) and noting that $\phi_\tau$ is $2\tau$-Lipschitz. To establish high-probability concentration, define:
    \begin{equation*}
        Z = \max_{\norm{v} = 1} \sum_{i = 1}^n W_{i, v} \text{ where } W_{i, v} \coloneqq \phi_\tau (\inp{X_i}{v})^2 - \E_{X \thicksim \mc{D}} [\phi_\tau (\inp{X}{v})^2].
    \end{equation*}
    We get as a consequence of \cref{eq:spec_bn} that:
    \begin{equation*}
        \E [Z] \leq 8\tau \sqrt{n \Tr (\Sigma)}.
    \end{equation*}
    Furthermore, we get:
    \begin{equation*}
        \forall \norm{v} = 1: \E [W_{i, v}^2] \leq \E [\phi_\tau (\inp{X_i}{v})^4] \leq \tau^2 \E [\phi_\tau (\inp{X_i}{v})^2] \leq \tau^2.
    \end{equation*}
    Finally, note that almost surely:
    \begin{equation*}
        W_{i, v} \leq \tau^2.
    \end{equation*}
    We now obtain via \cref{thm:bousequet_thm} by renormalizing $Y_{i, v} \coloneqq W_{i, v} / \tau^2$:
    \begin{equation*}
        \P \lbrb{Z \geq r} \leq \exp \lprp{- \frac{r^2}{32 (\tau^3 \sqrt{n \Tr \Sigma} + n\tau^2 + r\tau^2}}.
    \end{equation*}
    Now, by the following setting for $r$:
    \begin{equation*}
        r \coloneqq C \max \lbrb{\frac{\Tr \Sigma}{\trunc}, n},
    \end{equation*}
    and the above inequality, we get that:
    \begin{equation*}
        \P \lbrb{Z \geq r} \leq \exp (- \trunc n).
    \end{equation*}
    By observing that:
    \begin{equation*}
        \max_{v} \sum_{i = 1}^n \phi_\tau (\inp{X_i}{v})^2 \leq Z + n,
    \end{equation*}
    the lemma follows.
\end{proof}

\end{document}